\documentclass[12pt, onecolumn,draftcls]{IEEEtran} 
\usepackage{amssymb,amsmath}
\usepackage{graphicx}
\usepackage{color}
\newtheorem{thm}{Theorem}[section]
\newtheorem{prop}[thm]{Proposition}
\newtheorem{lem}[thm]{Lemma}
\newtheorem{rem}{Remark}[section]





\begin{document}

\title{Snapshot-based Balanced Truncation for Linear Time-periodic Systems}

\author{Zhanhua Ma, \and Clarence W. Rowley,\\{\small Department of Mechanical and Aerospace Engineering, Princeton University} \\\and  Gilead Tadmor\\{\small Department of Electrical and Computer Engineering, Northeastern University} \thanks{Z.~Ma and C.~W.~Rowley: MAE Department, Engineering Quad, Princeton University, Princeton, NJ 08544 (E-mail: zma@princeton.edu; cwrowley@princeton.edu; Phone: (609) 258-7321; Fax: (609) 258-6109). }\thanks{G.~Tadmor: Department of Electrical and Computer Engineering, Northeastern University, Boston, MA 02115 (E-mail: tadmor@ece.neu.edu; Phone: (617) 373-5277;
Fax: (617) 373-4189).}\thanks{Corresponding author: C. W. Rowley. }\thanks{This work was supported by AFOSR (grant FA9550-05-1-0369) and NSF (grant CMS-0347239).}} 
\maketitle
\begin{abstract}
We introduce an algorithm based on a method of snapshots for computing approximate balanced truncations for discrete-time, stable, linear time-periodic systems. By construction, this algorithm is applicable to very high-dimensional systems, even with very high-dimensional outputs (or, alternatively, very high-dimensional inputs).  An example is shown to validate the method.
\end{abstract}

\section{Introduction}
We consider the model reduction problem for stable, discrete-time periodic systems, using an approximation of balanced trunctation.  In particular, we combine the lifting approach developed in~\cite{FaBeDu2005} with a snapshot-based approximation described in~\cite{Ro2005}, which is tractable even for very high-dimensional systems, on the order of millions of states.

Several different algorithms are available for extending balanced truncation from linear time-invariant systems to time-periodic systems (see, e.g., \cite{Mo1981,LaBeDu1998,FaBeDu2005}).  Here, our interest is in systems with very large state dimension, on the order of tens of thousands or millions, for which the algorithms used in these previous approaches become intractable.  Such systems arise in the control of systems governed by partial differential equations, for instance in fluid mechanics, where often one approximates the full infinite-dimensional dynamics by a high-dimensional discretization.  Periodic systems may arise in these settings either as linearizations around a periodic orbit (e.g., vortex shedding~\cite{Noack-03}), or as linearizations of a system with periodic open-loop forcing (e.g., periodic pulsed blowing).  The goal of this paper is to describe a model reduction procedure that closely approximates balanced truncation, yet is computationally tractable even for these high-dimensional systems.  The resulting reduced-order models may be used for control synthesis, or other studies where the full high-dimensional system is unwieldy.

We suppose that while the number of states and outputs are both very large, the number of control inputs is small (by duality, we may alternatively assume that the number of outputs is small).  This case is also typical in practice, in which one often has a small number of actuators, or sources of disturbances, that one wants to model.  We also assume that the system is asymptotically stable.  Key ideas in the method presented here involve the computation of the balancing transformation directly from snapshots, without computing the controllability and observability Gramians, which become prohibitively large for these systems; and two different output projection methods for tractable computation with systems with high-dimensional output spaces.

The paper is organized as follows: in Section~\ref{background_sec} we summarize an existing approach to balanced truncation for time-periodic systems, and the corresponding lifted time-invariant  reformulation. In Section~\ref{bpod_sec} we present the main results, an approximate method based on snapshots, and an output projection based on the lifting approach that makes the snapshot method feasible for large numbers of outputs.  In Section~\ref{example_sec}, we demonstrate the method on a numerical example.

\section{Background on balanced truncation and linear periodic systems}
\label{background_sec}
\subsection{Balanced truncations for linear time-invariant systems}
Consider a discrete linear time-periodic system
\begin{equation}
	\begin{aligned}
	x(k+1)&=A(k)x(k)+B(k)u(k)\\
	y(k)&=C(k)x(k),
	\end{aligned}	
	\label{LPS}
\end{equation}
with state variable $x(k)\in\mathbb{C}^n$, input $u(k)\in\mathbb{C}^p$ and output $y\in\mathbb{C}^q$, and $k\in\mathbb{Z}$, where the matrices $A(k)\in\mathbb{C}^{n\times n}$, $B(k)\in\mathbb{C}^{n\times p}$ and $C(k)\in\mathbb{C}^{q\times n}$ are $T$-periodic (e.g., $A(k+T)=A(k)$ for all~$k$).

We assume that this system is asymptotically stable, in the following sense: defining $F(j,i)=A_{j-1}A_{j-2}\cdots A_i$ for $j>i$, with $F(i,i)=I_{n\times n}$, the $n\times n$ identity matrix, we consider the case where the spectral radius~$\rho\left(F(j+T, j)\right)$ is smaller than~$1$.  Note that periodicity implies that the non-zero eigenvalues of $F(j+T, j)$ are independent of time~$j$ \cite{GraLon1991}, whereby asymptotic stability is equivalent to uniform geometric decay, as in the LTI case.

Many different approaches are available for model reduction, including singular perturbation~\cite{Kokotovic:1999}, Hankel norm reduction, and balanced truncation.  Balanced truncation~\cite{Mo1981} has become a widely used method for linear systems, since it has a priori error bounds comparable to other methods, and is computationally tractable, at least for systems of moderate dimension, say $n<10^4$.  However, for very large systems, exact balanced truncation becomes computationally intractable, as the procedure involves finding coordinate transformations that simultaneously diagonalize non-sparse $n\times n$ matrices (controllability and observability Gramians).  To overcome this difficulty, a snapshot-based method has been proposed for approximate balanced truncation~\cite{Ro2005}, sometimes referred to as {\em balanced proper orthogonal decomposition} (balanced POD).  Here, the idea is to use ``empirical Gramians'' defined using snapshots of a simulation of the system, as in~\cite{LallMarsden-02}, and to compute the balancing transformation directly from the snapshots, without ever computing the Gramians.

The method of balanced truncation has been applied to linear time-periodic systems using several different approaches. One approach, used in~\cite{LaBeDu1998,LoOr1999,Varga2000,SaRa2004}, is to perform $T$ separate balanced truncations along a period. An alternative, suggested by~\cite{FaBeDu2005}, is a lifting approach, in which balanced truncation is applied to a time-invariant reformulation.   However, both of these approaches require the solution of Lyapunov equations or inequalities and are not computationally tractable for very large systems.  Thus, the objective of our  work is to obtain more computationally efficient algorithms for computing approximate balanced truncations for these large systems.

Before reviewing the theory for periodic systems, first recall the main idea of balanced truncation for linear time-invariant (LTI) stable systems (for more detail, see standard textbooks, such as~\cite{ZhDoGl1996,DuPa1999}).  One begins by defining controllability and observability Gramians $W_c$ and $W_o$ to measure to what degree each state is excited by an input, and each state excites future outputs, respectively.  One then seeks a coordinate transformation $\Phi$, called balancing transformation, so that the transformed Gramians $W_c\mapsto \Phi^{-1}W_c(\Phi^{-1})^*$ and $W_o\mapsto \Phi^*W_o\Phi$ are equal and diagonal (hence balanced): $\Phi^{-1}W_c(\Phi^{-1})^*=\Phi^*W_o\Phi=\Sigma=\operatorname{diag}(\sigma_1,\cdots,\sigma_n)$, where $\sigma_1\geqslant\cdots\geqslant\sigma_n\geqslant 0$ are the Hankel singular values. Finally, in these new coordinates, one truncates the states that are least controllable and observable, corresponding to the smallest Hankel singular values.

\subsection{Controllability and Observability Gramians for linear periodic systems}
For time-periodic systems, one may also define controllability and observability Gramians, as in~\cite{Varga2000}.   The \emph{controllability Gramian at time $j$}, $W_c(j)$, and \emph{observability Gramian at time $j$}, $W_o(j)$, for the complex stable periodic system (\ref{LPS}) are defined similarly by the following two positive-semidefinite $n\times n$ matrices
\begin{equation}
\begin{aligned}
W_c(j)&:=\sum_{i=-\infty}^{j-1}F(j,i+1)B(i)B(i)^*F(j,i+1)^*\\
W_o(j)&:=\sum_{i=j}^{\infty}F(i,j)^*C(i)^*C(i)F(i,j),
\end{aligned}
\label{Grammian_def}
\end{equation}
where the superscript $^*$ denotes the adjoint of a linear operator.  See the Appendix for the specific properties these Gramians satisfy.  Both $W_c(j)$ and $W_o(j)$ are $T$-periodic.    Also, for each $j$, $W_c(j)$ and $W_o(j)$ satisfy the following respective discrete Lyapunov equations:
\begin{equation}
\begin{aligned}
A(j)W_c(j)A(j)^*-W_c(j+1)+B(j)B(j)^*&=0\\
A(j)^*W_o(j+1)A(j)-W_o(j)+C(j)^*C(j)&=0.
\end{aligned}
\label{LyapunovEqn}
\end{equation}
As discussed in the Appendix, when convergent, the definitions (\ref{Grammian_def}) (equivalently,~(\ref{LyapunovEqn})) extend to general linear time-varying systems of the form~(\ref{LPS}), with the same interpretations in terms of input-to-state and state-to-output mappings.

\subsection{Traditional approach of balanced truncation for periodic systems}
If the dimension~$n$ of the system is not too large, it is possible to solve the above pairs of Lyapunov equations for Gramians $W_c(j)$, $W_o(j)$ for each $j$, $1\leqslant j\leqslant T$.  One can then do balanced truncation \emph{at each time $j$}.   Refer to \cite{LaBeDu1998}, \cite{LoOr1999}, \cite{SaRa2004} for detailed discussions.  Note that Farhood et al. \cite{FaBeDu2005} also compute the Gramians/generalized Gramians by solving corresponding Lyapunov equations/inequalities at each time step; however, the balanced truncation is then realized in a lifted time-invariant setting, not in the original periodic one.  In this paper we will also use the lifting approach, in conjunction with a method of snapshots as in \cite{Ro2005}.

\subsection{The lifted system}
As discussed in~\cite{GoKaLe1992,FaBeDu2005}, the linear periodic time-variant system (\ref{LPS}) can be equivalently rewritten as a linear time invariant (LTI) system:
\begin{equation}
\begin{aligned}
\tilde{x}_j(t+1)&=\tilde{A}_j\tilde{x}_j(t)+\tilde{B}_j\tilde{u}_j(t)\\
\tilde{y}_j(t)&=\tilde{C}_j\tilde{x}_j(t)+\tilde{D}_j\tilde{u}_j(t),
\end{aligned}
\label{LTI_sys}
\end{equation}
where $j$ is fixed and 
\begin{align*}
\tilde{x}_j(t)&=x(j+tT)\\
\tilde{y}_j(t)&=
\begin{bmatrix}
y(j+tT)\\y(j+tT+1)\\\vdots\\y\left(j+(t+1)T-1\right)
\end{bmatrix}\\
\tilde{u}_j(t)&=\begin{bmatrix}
u(j+tT)\\u(j+tT+1)\\\vdots\\u\left(j+(t+1)T-1\right)
\end{bmatrix}
\end{align*}
and
\begin{align*}
\tilde{A}_j&=F(j+T,j)=A(j+T-1)A(j+T-2)\cdots A(j)\\
\tilde{B}_j&=\big[E (j+T,j+1)B(j),\cdots,\\
	&\qquad E (j+T,j+T-1)B(j+T-2), B(j+T-1)\big]\\
\tilde{C}_j&=\begin{bmatrix}
C(j)\\C(j+1)F(j+1,j)\\\vdots\\C(j+T-1)F(j+T-1,j)
\end{bmatrix}\\
\tilde{D}_j&=\begin{bmatrix}
0& & & 0\\F_{j,2,1}&0& & \\\vdots &\vdots & \ddots & \\F_{j,T,1} & F_{j,T,2} & \cdots & 0
\end{bmatrix},
\end{align*}
where $F_{j,i,k}=C(j+i-1)F(j+i-1,j+k)B(j+k-1)$.  We refer to the time invariant system (\ref{LTI_sys}) as a \emph{lifted system at time $j$}, in which $\tilde{A}_j$, $\tilde{B}_j$, $\tilde{C}_j$ and $\tilde{D}_j$ are constant matrices.  The lifted system may be viewed as a Poincar\'{e} map of the original periodic system in the state space at times $(j+tT)$, while the input and output information for an entire period is kept at each iteration of the Poincar\'e map.

The controllability Gramian $\tilde{W}_{jc}$ and observability Gramian $\tilde{W}_{jo}$ for this LTI lifted system are conventionally defined by
the two positive-semidefinite $n\times n$ matrices 
\begin{equation}
\begin{aligned}
\tilde{W}_{jc}&:=\sum_{i=0}^{\infty}\tilde{A}_j^i\tilde{B}_j\tilde{B}_j^*\left(\tilde{A}_j^i\right)^*\\
\tilde{W}_{jo}&:=\sum_{i=0}^{\infty}\left(\tilde{A}_j^i\right)^*\tilde{C}_j^*\tilde{C}_j\tilde{A}_j^i.
\end{aligned}
\label{Grammian_def_LTI}
\end{equation}
They respectively satisfy the following discrete Lyapunov equations:
\begin{equation}
\begin{aligned}
\tilde{A}_j\tilde{W}_{jc}\tilde{A}_j^*-\tilde{W}_{jc}+\tilde{B}_{j}\tilde{B}_{j}^*&=0\\
\tilde{A}_j^*\tilde{W}_{jo}\tilde{A}_j-\tilde{W}_{jo}+\tilde{C}_{j}^*\tilde{C}_{j}&=0.
\end{aligned}
\label{LyapunovEqn_LTI}
\end{equation}
The above setting allows one to study the lifted LTI system instead of the original periodic one such that the well-developed balanced truncation techniques for LTI systems are available.  We emphasize that in the lifted LTI system,  only one time of balanced truncation is needed.    However, the drawbacks of working on the lifted system are that it has even higher dimensional inputs and outputs, and that we usually do not have the explicit   form of the lifted system in computations.  The approach taken in this paper is as follows: \emph{by using the equivalence between the LTI (lifted) and time-periodic systems, we apply the balanced POD procedure in the LTI setting but do computations in the original periodic setting}.  To realize this plan, first we list the following statement establishing the equivalent relations between the Gramians of the lifted system and the  original periodic system. Proofs are by direct calculations, using periodicity. 
\begin{prop}
 Consider an arbitrary time $j$.  The controllability and observability Gramians at time $j$ of the periodic system (\ref{LPS}), i.e., $W_c(j)$ and $W_o(j)$ given by (\ref{Grammian_def})   are respectively equal to the controllability and observability Gramians $\tilde{W}_{jc}$ and $\tilde{W}_{jo}$ of the lifted system (\ref{LTI_sys}) at time $j$ given by (\ref{Grammian_def_LTI}).

\label{Prop Gramian Contrl Observ}
\end{prop}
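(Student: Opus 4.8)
The plan is to establish the two identities $W_c(j)=\tilde W_{jc}$ and $W_o(j)=\tilde W_{jo}$ by reindexing the series in (\ref{Grammian_def_LTI}) term by term onto the series in (\ref{Grammian_def}); the only ingredients will be the $T$-periodicity of $A$, $B$, $C$ together with the composition (cocycle) identity $F(c,b)F(b,a)=F(c,a)$ for $c\geqslant b\geqslant a$. Since $\rho(F(j+T,j))<1$, all four series converge absolutely, so these rearrangements are legitimate.

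First I would record the preliminary facts. Iterating the composition identity after shifting each factor by a multiple of $T$ gives $\tilde A_j^i=F(j+T,j)^i=F(j+iT,j)$ for every $i\geqslant 0$. Next, reading off the definitions of $\tilde B_j$ and $\tilde C_j$ and writing $b_\ell:=F(j+T,j+\ell+1)B(j+\ell)$ for $\ell=0,\dots,T-1$ (its $(\ell+1)$-st block column) and $c_m:=C(j+m)F(j+m,j)$ for $m=0,\dots,T-1$ (its $(m+1)$-st block row), one has $\tilde B_j\tilde B_j^*=\sum_{\ell=0}^{T-1}b_\ell b_\ell^*$ and $\tilde C_j^*\tilde C_j=\sum_{m=0}^{T-1}c_m^*c_m$. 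Substituting these into (\ref{Grammian_def_LTI}) turns each Gramian into a double sum, over $(i,\ell)$ or $(i,m)$.

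The core of the proof is then a pair of stitching identities. For observability, $c_m\tilde A_j^i=C(j+m)\,F(j+m,j)\,F(j+iT,j)$; shifting the endpoints of the first transition matrix up by $iT$ rewrites $F(j+m,j)=F(j+iT+m,\,j+iT)$, so the composition identity collapses the product to $F(j+iT+m,j)$, and periodicity of $C$ makes the prefactor $C(j+iT+m)$. Hence $c_m\tilde A_j^i=C(\ell)F(\ell,j)$ with $\ell:=j+iT+m$, and as $(i,m)$ runs over $\{0,1,\dots\}\times\{0,\dots,T-1\}$ the index $\ell$ runs bijectively over $\{\ell\in\mathbb{Z}:\ell\geqslant j\}$; summing $(c_m\tilde A_j^i)^*(c_m\tilde A_j^i)$ therefore reproduces $W_o(j)$ exactly. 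The controllability case is dual: one shows $\tilde A_j^i b_\ell=F(j,\ell'+1)B(\ell')$ with $\ell':=j-(i+1)T+\ell$, and since $(i,\ell)$ then runs bijectively over the set of all $\ell'\leqslant j-1$, summing $(\tilde A_j^i b_\ell)(\tilde A_j^i b_\ell)^*$ reproduces $W_c(j)$.

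I expect the controllability stitching identity to be the only delicate step: unlike $c_m\tilde A_j^i$, here $\tilde A_j^i=F(j+iT,j)$ propagates forward from time $j$ while the block $b_\ell$ already carries the partial-period propagator $F(j+T,j+\ell+1)$ from time $j+\ell+1$, so the two factors do not chain under a single use of the composition rule. The remedy is to start from the target $F(j,\ell'+1)=A(j-1)A(j-2)\cdots A(\ell'+1)$, split this product at index $j-iT$, shift the upper block up by $iT$ to recover $F(j+iT,j)$ and the lower block up by $(i+1)T$ to recover $F(j+T,j+\ell+1)$, and reassemble; periodicity of $B$ then supplies $B(j+\ell)=B(\ell')$. (An alternative to the entire reindexing computation is to iterate the periodic Lyapunov equations (\ref{LyapunovEqn}) exactly $T$ times and use $W_c(j+T)=W_c(j)$, $W_o(j+T)=W_o(j)$ to check that $W_c(j)$ and $W_o(j)$ solve the lifted Lyapunov equations (\ref{LyapunovEqn_LTI}); uniqueness of the solution, again a consequence of $\rho(\tilde A_j)<1$, then gives the claim.)
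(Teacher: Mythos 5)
Your proof is correct and is exactly the ``direct calculation using periodicity'' that the paper invokes without writing out: the block decompositions $\tilde B_j\tilde B_j^*=\sum_\ell b_\ell b_\ell^*$, $\tilde C_j^*\tilde C_j=\sum_m c_m^*c_m$, the identity $\tilde A_j^i=F(j+iT,j)$, and the two stitching/reindexing identities (including the index shift needed on the controllability side) all check out, and the bijections $(i,m)\mapsto j+iT+m$ and $(i,\ell)\mapsto j-(i+1)T+\ell$ match the summation ranges in (\ref{Grammian_def}). Your parenthetical alternative via iterating (\ref{LyapunovEqn}) over one period and invoking uniqueness of the solution of (\ref{LyapunovEqn_LTI}) is also valid, and is essentially the route taken in the lifting literature the paper cites.
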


An immediate corollary from the above statement is  that, as stated in~\cite{FaBeDu2005},~\cite{LoOr1999},  solutions of the time-varying Lyapunov equations~(\ref{LyapunovEqn}) are equivalent to the solution of the time-invariant Lyapunov equations~(\ref{LyapunovEqn_LTI}) in the lifted setting.  This result plays an important role in traditional lifting approaches of balanced truncations for periodic systems, where the Gramians are found by solving Lyapunov equations.  In the following section, we show how Proposition~\ref{Prop Gramian Contrl Observ} allows us to apply a snapshot-based algorithm for computing balancing transformations for periodic systems, thereby avoiding the need to solve Lyapunov equations.

\section{Balanced POD for linear periodic systems}
\label{bpod_sec}

\subsection{Method of snapshots for computation of empirical Gramians for time-varying discrete systems}
\label{snapshot_subsec}
The method of snapshots provides a numerical approximation of Gramians directly based on their definitions.  Refer to \cite{Ro2005} and the references therein for the application of this method to linear time-invariant systems.    

For time-varying discrete systems, first, we define the \emph{empirical controllability Gramian} and \emph{empirical observability Gramian} at time $j$ for the periodic system (\ref{LPS}) as 
\begin{align}
W_{ce}(j; m)&:=\sum_{i=j-m}^{j-1}F(j,i+1)B(i)B(i)^*F(j,i+1)^*
\label{Gramian_Controllability_Empirical}\\
W_{oe}(j; m)&:=\sum_{i=j}^{j+m-1}F(i,j)^*C(i)^*C(i)F(i,j),
\label{Gramian_Observability_Empirical}
\end{align}
where $m$ is a nonnegative integer. Clearly, $\lim_{m\rightarrow \infty}W_{ce}(j;m) = W_c(j)$; $\lim_{m\rightarrow \infty}W_{oe}(j;m) = W_o(j)$.  Further, the following result gives upper error bounds of $\|W_c(j)-W_{ce}(j;m)\|$ and $\|W_o(j)-W_{oe}(j;m)\|$.
\begin{lem}
Consider an arbitrary time $j$.  Let $m=lT$, where $l$ is a nonnegative integer.  For the linear periodic system (\ref{LPS}),
\begin{equation}
\begin{aligned}
\frac{\|W_c(j)-W_c(j;m)\|}{\|W_c(j)\|}&\leqslant \|F(j+T,j)^l\|^{2}\\ \frac{\|W_o(j)-W_o(j;m)\|}{\|W_o(j)\|}&\leqslant \|F(j+T,j)^l\|^{2},
\end{aligned}
\end{equation}
where an induced norm is used. 
\label{lemma Empirical Gramian error bound LPS}
\end{lem}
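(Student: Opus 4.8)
\emph{Proof proposal.}
The plan is to write each discarded tail---the part of the defining series (\ref{Grammian_def}) that is \emph{not} retained in the empirical Gramian---as a conjugation of the full Gramian by a power of the one-period transition matrix $\tilde A_j=F(j+T,j)$; once this is done, the bound is immediate from submultiplicativity of the induced norm. I treat the controllability estimate explicitly; the observability one is entirely dual.

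First I would record the two consequences of $T$-periodicity that do all the work. Directly from the definition $F(j,i)=A(j-1)\cdots A(i)$ one has the composition rule $F(a,c)=F(a,b)F(b,c)$ for $c\le b\le a$, and, using $A(k+T)=A(k)$, the shift rule $F(a+T,b+T)=F(a,b)$; iterating the shift rule together with the composition rule gives $F(j,\,j-lT)=F(j+T,j)^{\,l}=\tilde A_j^{\,l}$ and, likewise, $F(j+lT,\,j)=\tilde A_j^{\,l}$. Second, as noted in the excerpt, $W_c(\cdot)$ and $W_o(\cdot)$ are $T$-periodic, so $W_c(j-lT)=W_c(j)$ and $W_o(j+lT)=W_o(j)$.

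Now take $m=lT$. Subtracting (\ref{Gramian_Controllability_Empirical}) from (\ref{Grammian_def}) leaves exactly the dropped tail,
\[
W_c(j)-W_{ce}(j;lT)=\sum_{i=-\infty}^{\,j-lT-1}F(j,i+1)B(i)B(i)^*F(j,i+1)^*.
\]
For every $i$ in this sum we have $i+1\le j-lT\le j$, so the composition rule splits $F(j,i+1)=F(j,\,j-lT)\,F(j-lT,\,i+1)=\tilde A_j^{\,l}\,F(j-lT,i+1)$. Factoring $\tilde A_j^{\,l}$ out on the left and $(\tilde A_j^{\,l})^*$ on the right, the remaining sum is precisely $W_c(j-lT)$ by definition (\ref{Grammian_def}), and $W_c(j-lT)=W_c(j)$ by periodicity; hence $W_c(j)-W_{ce}(j;lT)=\tilde A_j^{\,l}\,W_c(j)\,(\tilde A_j^{\,l})^*$. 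Writing the positive-semidefinite matrix $W_c(j)$ as $GG^*$ with $\|G\|^{2}=\|W_c(j)\|$, we get $\|W_c(j)-W_{ce}(j;lT)\|=\|\tilde A_j^{\,l}G\|^{2}\le\|\tilde A_j^{\,l}\|^{2}\|W_c(j)\|$, and dividing by $\|W_c(j)\|$ gives the claimed inequality. For observability the argument is identical: the dropped tail is $\sum_{i=j+lT}^{\infty}F(i,j)^*C(i)^*C(i)F(i,j)$, for $i\ge j+lT$ one writes $F(i,j)=F(i,j+lT)\,F(j+lT,j)=F(i,j+lT)\,\tilde A_j^{\,l}$, and one obtains $W_o(j)-W_{oe}(j;lT)=(\tilde A_j^{\,l})^*\,W_o(j)\,\tilde A_j^{\,l}$, hence the same bound.

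I do not expect a real obstacle here; the only delicate point is the index bookkeeping required to apply the composition rule over the correct range and then recognize the leftover sum as $W_c(j-lT)$ (resp.\ $W_o(j+lT)$). Two minor points are worth stating in the write-up: the final inequality uses $\|(\tilde A_j^{\,l})^*\|=\|\tilde A_j^{\,l}\|$, which holds for the spectral $2$-norm (the induced norm intended in the statement); and the normalization step tacitly assumes $\|W_c(j)\|\neq 0$, the complementary case being trivial since then $W_{ce}(j;lT)=W_c(j)=0$.
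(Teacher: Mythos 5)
Your proof is correct, and it arrives at exactly the identity that powers the paper's argument, namely $W_c(j)-W_{ce}(j;lT)=\tilde A_j^{\,l}\,W_c(j)\,(\tilde A_j^{\,l})^*$ with $\tilde A_j=F(j+T,j)$ (and its dual for $W_o$); the difference is purely in how you get there. The paper routes the argument through the lifted system: it invokes Proposition~\ref{Prop Gramian Contrl Observ} to identify $W_c(j)$ with $\tilde W_{jc}$ and $W_{ce}(j;lT)$ with the lifted empirical Gramian $\tilde W_{jce}(l)$, and then applies the separately stated LTI estimate (Lemma~\ref{lemma Empirical Gramian error bound LTI}), whose one-line proof is the telescoping $\sum_{i\ge l}A^iBB^*(A^i)^*=A^lW_c(A^l)^*$. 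You instead carry out the same telescoping directly on the time-varying sums, using the composition rule $F(a,c)=F(a,b)F(b,c)$, the shift identity $F(j,j-lT)=F(j+T,j)^l$, and $T$-periodicity of $W_c(\cdot)$ to recognize the leftover sum as $W_c(j-lT)=W_c(j)$. What the paper's route buys is reuse of the lifting machinery it has already set up (and a standalone LTI lemma of independent use); what yours buys is self-containedness and explicit index bookkeeping that makes clear the result does not really depend on the lifting. Your two closing caveats are also well taken and apply equally to the paper's version: the final step uses $\|(\tilde A_j^{\,l})^*\|=\|\tilde A_j^{\,l}\|$ (or your variant $\|GG^*\|=\|G\|^2$), which holds for the spectral norm but not for an arbitrary induced norm, and the division by $\|W_c(j)\|$ presumes $W_c(j)\ne 0$.
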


The proof is based on the following result for linear time-invariant systems.
\begin{lem}
Consider a linear stable time-invariant system
\begin{align*}
x(k+1) &= Ax(k)+Bu(k)\\
y(k)&=Cx(k),
\end{align*}
where $A$, $B$ and $C$ are constant matrices and whose controllability and observability Gramians are $W_c=\sum_{i=0}^{\infty}A^iBB^*\left(A^i\right)^*$ and $W_o=\sum_{i=0}^{\infty}\left(A^i\right)^*C^*CA^i$. Define the empirical controllability and observability Gramians for this LTI system as 
\begin{align}
W_{ce}(l)&:=\sum_{i=0}^{l-1}A^iBB^*\left(A^i\right)^*;
\label{Gramian_Controllability_Empirical LTI}\\
W_{oe}(l)&:=\sum_{i=0}^{l-1}\left(A^i\right)^*C^*CA^i,
\label{Gramian_Observability_Empirical LTI}
\end{align}
where $l$ is a nonnegative integer. Thus,
\begin{equation*}
\frac{\|W_c-W_{ce}(l)\|}{\|W_c\|}\leqslant \|A^l\|^{2},\qquad \frac{\|W_o-W_{oe}(l)\|}{\|W_o\|}\leqslant \|A^l\|^{2},
\end{equation*}
where an induced norm is used.

\label{lemma Empirical Gramian error bound LTI}
\end{lem}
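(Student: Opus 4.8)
The plan is to recognize that the truncation error $W_c-W_{ce}(l)$ is precisely the discarded tail of the series defining $W_c$, and that this tail is a ``sandwiched'' copy of $W_c$ itself. First I would write
\[
W_c-W_{ce}(l)=\sum_{i=l}^{\infty}A^iBB^*(A^i)^*,
\]
reindex by $i=l+k$, and factor $A^l$ out on the left and $(A^l)^*$ on the right to obtain the identity
\[
W_c-W_{ce}(l)=A^l\left(\sum_{k=0}^{\infty}A^kBB^*(A^k)^*\right)(A^l)^*=A^l\,W_c\,(A^l)^*.
\]

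Next I would take the induced ($2$-)norm of both sides, apply submultiplicativity together with $\|(A^l)^*\|=\|A^l\|$, and divide by $\|W_c\|$:
\[
\|W_c-W_{ce}(l)\|\leqslant\|A^l\|\,\|W_c\|\,\|(A^l)^*\|=\|A^l\|^2\,\|W_c\|,
\]
which is the first claimed bound. The observability estimate is entirely dual: the analogous reindexing gives $W_o-W_{oe}(l)=(A^l)^*\,W_o\,A^l$, and the same norm argument produces $\|W_o-W_{oe}(l)\|\leqslant\|A^l\|^2\|W_o\|$.

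The main obstacle, though a mild one, is justifying the manipulations of the infinite series: that $W_c$ and $W_o$ are well defined in the first place, and that the factors $A^l$ and $(A^l)^*$ may be pulled outside the sums. This is where stability is used. From $\rho(A)<1$ one has $\|A^k\|\leqslant M\gamma^k$ for some $M>0$ and some $\gamma\in(\rho(A),1)$, so $\sum_{k\geqslant0}\|A^kBB^*(A^k)^*\|\leqslant M^2\|B\|^2\sum_{k\geqslant0}\gamma^{2k}<\infty$; the series therefore converge absolutely in norm, and the reindexing and factoring are legitimate by continuity of (finite-dimensional) matrix multiplication. Finally I would dispose of the degenerate case: if $W_c=0$, equivalently $B=0$, the inequality holds trivially with both sides zero, and similarly for $W_o$ when $C=0$, so dividing by $\|W_c\|$ (resp.\ $\|W_o\|$) is harmless in all remaining cases.
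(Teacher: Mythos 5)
Your proposal is correct and follows essentially the same route as the paper: identify the truncation error with the tail $A^lW_c(A^l)^*$ (resp.\ $(A^l)^*W_oA^l$) and bound it by submultiplicativity of the induced norm. The extra remarks on absolute convergence and the degenerate case $W_c=0$ are fine housekeeping that the paper omits.
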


\begin{proof}
Under any induced norm,
\begin{align*}
\|W_c-W_{ce}(l)\|&=\left\|\sum_{i=l}^{\infty}A^iBB^*\left(A^i\right)^*\right\|
=\left\|A^lW_c\left(A^l\right)^*\right\|\\
&\leqslant\|A^l\|\|W_c\|\left\|\left(A^l\right)^*\right\|=\|A^l\|^2\|W_c\|.
\end{align*}
Similarly, we have $\|W_o-W_{oe}(l)\|\leqslant\|A^l\|^{2}\|W_o\|$.
\end{proof}

\begin{proof}[Proof of Lemma~\ref{lemma Empirical Gramian error bound LPS}]
By Proposition \ref{Prop Gramian Contrl Observ}, $W_c(j)=\tilde{W}_{jc}$. Also, it is straightforward to show $W_{ce}(j;m)=\tilde{W}_{jce}(l)$, where $\tilde{W}_{jce}(l)$ is the empirical controllability Gramian of the lifted LTI system at time $j$, defined in form of (\ref{Gramian_Controllability_Empirical LTI}).  Thus, 
\begin{align*}
W_c(j)-W_{ce}(j;m) = \tilde{W}_{jc} - \tilde{W}_{jce}(l),
\end{align*}
and the result follows by applying Lemma \ref{lemma Empirical Gramian error bound LTI} to the lifted LTI system at time $j$. The result for the observability Gramian follows from a similar argument.
\end{proof}
Indeed, for any $\epsilon>0$, there is an induced norm $\|\cdot\|$ such that $\|F((j+T,j)\|\leqslant\rho\left(F(j+T,j)\right)+\epsilon$ (by Lemma 5.6.10 in~\cite{HoJo1985}). Thus, with an $\epsilon$ satisfying $\rho\left(F(j+T,j)\right)+\epsilon<1$, Lemma \ref{lemma Empirical Gramian error bound LPS} implies that under a certain matrix norm, 
\begin{align*}
\frac{\|W_c(j)-W_c(j;m)\|}{\|W_c(j)\|}&< \left(\rho\left(F(j+T,j)\right)+\epsilon\right)^{2l}\\ \frac{\|W_o(j)-W_o(j;m)\|}{\|W_o(j)\|}&<\left(\rho\left(F(j+T,j)\right)+\epsilon\right)^{2l},
\end{align*}
where $m=lT$.  Thus, a large enough $m$ guarantees that the error between empirical and exact Gramians is small.   This result encourages one to use empirical Gramians instead of exact ones to realize approximate balanced truncations, whenever it is difficult to compute exact Gramians. The following subsections introduce the method of snapshots by which we can calculate those empirical Gramians for linear periodic systems.  Also note that it is clear that the condition $m=lT$ is not crucial to obtain a result similar to that in the Lemma \ref{lemma Empirical Gramian error bound LPS}.

\subsubsection{Computation of the empirical controllability Gramian}
\label{sec:ctrb_gram}

Without loss of generality, let $1\leqslant j\leqslant T$.  The empirical controllability Gramian $W_{ce}(j;m)$ can be obtained by running a series of impulse-response simulations for system~(\ref{LPS}) as follows.  

Consider the $d$-th column of $B(k)$, denoted by $[B(k)]^d$. To obtain a finite sum $\sum_{i=j-m}^{j-1}F(j,i+1)[B(k)]^d\left([B(k)]^d\right)^*F(j,i+1)^*$, one starts the first simulation with the initial condition $x(j-m)=0$ and a unit impulse $u(j-m)=[0,\cdots,0,1,0,\cdots,0]^{\top}$ whose $d$-th entry is $1$.  By running $m$ steps, we obtain
\begin{align*}
x(j-m+1)&=[B(j-m)]^d;\\
x(j-m+2)&=F(j-m+2,j-m+1)[B(j-m)]^d;\\
&\;\vdots\\
x(j)&=F(j,j-m+1)[B(j-m)]^d.
\end{align*}
Notice that $x(j)$ is a snapshot we need since $x(j)x(j)^*$ appears in the finite sum.  Similarly, we run the second simulation with initial condition $x(j-m+1)=0$ and unit impulse $u(j-m+1)=[0,\cdots,0,1,0,\cdots,0]^{\top}$, and by running $(m-1)$ steps we get $x(j-m+2)=[B(j-m+1)]^i$, $\cdots$, $x(j)=F(j,j-m+2)[B(j-m+1)]^i$, where the $x(j)$ will be used to construct the finite sum.  Repeat the above process until the $m$-th simulation, with initial condition $x(j-1)=0$ and unit impulse $u(j-1)=[0,\cdots,0,1,0,\cdots,0]^{\top}$, where only one step of simulation is needed to obtain $x(j)=[B(j-1)]^d$.  By running the $m$ simulations with $m+\cdots+1=m(m+1)/2$ steps in all, we can thus construct the sum $\sum_{i=j-m}^{j-1}F(j,i+1)[B(k)]^d\left([B(k)]^d\right)^*F(j,i+1)^*$.  Since $B(k)$ has $p$ columns, we need to run $mp$ simulations, with $m(m+1)p/2$ steps total, to obtain $mp$ snapshots.  Clearly, we have
\begin{prop}{\bf (Controllability Gramian from snapshots)}
Define an $n\times mp$ dimensional matrix
\begin{equation}
    \begin{gathered}
    X(j;m)=\big[F(j,j-m+1)[B(j-m)]^1,\cdots,[B(j-1)]^1,\cdots,\\ F(j,j-m+1)[B(j-m)]^p,\cdots,[B(j-1)]^p\big],
    \end{gathered}
\end{equation}
whose columns are the snapshots obtained through the $mp$ impulse-response simulations described above. Then 
\begin{equation}
X(j;m)X(j;m)^*=W_{ce}(j;m).
\label{Gramian_Controllability_Empirical Factor X}
\end{equation}
\label{Prop Empirical contrl gramian}
\end{prop}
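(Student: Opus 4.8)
The plan is to reduce the identity to the elementary fact that a matrix times its own conjugate transpose equals the sum of the outer products of its columns. Concretely, I would write $B(i)=\big[[B(i)]^1,\cdots,[B(i)]^p\big]$, so that $B(i)B(i)^*=\sum_{d=1}^{p}[B(i)]^d\big([B(i)]^d\big)^*$. Substituting this into the definition~(\ref{Gramian_Controllability_Empirical}) of $W_{ce}(j;m)$ and moving the factor $F(j,i+1)$ inside the sum over $d$ yields
\[
W_{ce}(j;m)=\sum_{i=j-m}^{j-1}\sum_{d=1}^{p}\big(F(j,i+1)[B(i)]^d\big)\big(F(j,i+1)[B(i)]^d\big)^*,
\]
a sum of $mp$ rank-one positive-semidefinite matrices. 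Here I would note the endpoint $i=j-1$: there $F(j,j)=I_{n\times n}$, so the corresponding summands are $[B(j-1)]^d\big([B(j-1)]^d\big)^*$, consistent with the last columns listed in the definition of $X(j;m)$.

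Next I would identify the vectors appearing above with the columns of $X(j;m)$. Reading off its definition, the columns of $X(j;m)$ are precisely the $mp$ vectors $F(j,i+1)[B(i)]^d$ with $j-m\leqslant i\leqslant j-1$ and $1\leqslant d\leqslant p$, ordered by $d$ on the outside and by $i$ on the inside. This is also the step where one confirms that these vectors are exactly the snapshots $x(j)$ returned by the $mp$ impulse-response simulations described before the statement: the simulation started at time $i$ with a unit impulse in the $d$-th input channel produces $x(i+1)=[B(i)]^d$, and propagating it forward the remaining $j-i-1$ steps with zero input gives $x(j)=F(j,i+1)[B(i)]^d$.

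Finally, writing $X(j;m)=[v_1,\cdots,v_{mp}]$ with the $v_k$ being these columns, one has $X(j;m)X(j;m)^*=\sum_{k=1}^{mp}v_kv_k^*$, which matches the expanded double sum for $W_{ce}(j;m)$ term by term, giving~(\ref{Gramian_Controllability_Empirical Factor X}). I do not expect any genuine obstacle here: the proof is essentially a reindexing, and the only points demanding care are keeping the bookkeeping between the double index $(i,d)$ and the single column index of $X(j;m)$ straight, and handling the $F(j,j)=I$ endpoint correctly.
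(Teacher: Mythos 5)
Your argument is correct and is precisely the reasoning the paper leaves implicit (the paper simply asserts the result with ``Clearly, we have'' after describing the simulations): expand $B(i)B(i)^*$ into outer products of its columns, identify the resulting $mp$ vectors $F(j,i+1)[B(i)]^d$ with the columns of $X(j;m)$, and invoke $XX^*=\sum_k v_kv_k^*$. Nothing is missing; your handling of the $F(j,j)=I$ endpoint and the $(i,d)$ bookkeeping is exactly the care the statement requires.
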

Note that physically the empirical controllability Gramian is based on considering the total influence of the past history of impulse inputs on the `current state' $x(j)$ by neglecting those impulses $u(k)$, $k< j-m$, whose influence on the current state has mostly decayed with sufficiently large $m$.    

The above procedure is generally valid for calculation of the empirical controllability Gramian of a time-varying stable system.  The $T$-periodic feature of our system can substantially save the computational effort.  For instance, the snapshot $F(j,j-m+T+1)[B(j-m+T)]^d$ is obtained by running the `$(T+1)$-th' simulation mentioned above for $m-T$ steps, with initial condition $x(j-m+T)=0$ and corresponding unit impulse. However, to run this simulation is not necessary, since $F(j,j-m+T+1)[B(j-m+T)]^d=F(j-T,j-m+1)[B(j-m)]^d$, which has been calculated in the `first' simulation we mentioned above at the $(m-T)$-th step.  Indeed, it is clear that for a $T$-periodic system, when $m>T$, to obtain the $mp$ snapshots for construction of $X(j;m)$, one needs only run $Tp$ impulse-response simulations: For each column $[B(k)]^d$, starting from $x(j-m)=0$ run a simulation for $m$ steps. Then, start from $x(j-m+1)=0$, running for $m-1$ steps. Repeat the process until the simulation starting from $x(j-m+T-1)=0$ and running for $m-T+1$ steps.    The data set obtained is sufficient for construction of $X(j;m)$.

\subsubsection{Computation of the empirical observability Gramian}
\label{sec:obsv_gram}
To calculate $W_{oe}(j;m)$ for a fixed $j$, $1\leqslant j\leqslant T$, by the method of snapshots, one needs to construct an \emph{adjoint system}
\begin{equation}
z(k+1)=\hat{A}(k)z(k)+\hat{C}(k)v(k)
\label{adjoint system}
\end{equation}
where $k=j,\cdots,j+m-1$, the state $z(k)\in\mathbb{C}^n$, the control input $v(k)\in\mathbb{C}^q$, and 
\begin{align*}
\hat{A}(k)=A(2j+m-k-1)^*,\qquad \hat{C}(k)=C(2j+m-k-1)^*.
\end{align*}
As above,  here we run a series of impulse-response simulations for the adjoint system.  Consider the $d$-th column of $\hat{C}(k)$, $[\hat{C}(k)]^d=\left[C(2j+m-k-1)^*\right]^d$.  One starts the first simulation with the initial condition $z(j)=0$ and a unit impulse $v(j)=[0,\cdots,0,1,0,\cdots,0]^{\top}$ whose $d$-th entry is~$1$.  By running the simulation for $m$ steps, we obtain 
\begin{align*}
z(j+1)&=\left[C(j+m-1)^*\right]^d;\\
z(j+2)&=F(j+m-1,j+m-2)^*\left[C(j+m-1)^*\right]^d;\\
&\;\vdots\\
z(j+m)&=F(j+m-1,j)^*\left[C(j+m-1)^*\right]^d,
\end{align*}
where  $z(j+m)$ is a snapshot we need for computing the $W_{oe}(j;m)$.  The second simulation starts with initial condition $z(j+1)=0$ and unit impulse $v(j+1)=[0,\cdots,0,1,0,\cdots,0]^{\top}$, and by running $(m-1)$ steps we stop at $z(j+m)=F(j+m-2,j)^*\left[C(j+m-2)^*\right]^d$, another snapshot we need.  Repeat the above process until the $m$-th simulation, with initial condition $z(j+m-1)=0$ and unit impulse $v(j+m-1)=[0,\cdots,0,1,0,\cdots,0]^{\top}$, where only one step of simulation is needed to obtain $z(j+m)=\left[C(j)^*\right]^d$.   Since $J(k)$ has $q$ columns, we need to run $mq$ simulations, with $m(m+1)q/2$ steps total, to obtain $mq$ snapshots, with which we have the following statement: 

\begin{prop}{\bf (Observability Gramian from snapshots)}
Define an $n\times mq$ dimensional matrix
\begin{equation}
\begin{gathered}
    Y(j;m)=\Big[F(j+m-1,j)^*\left[C(j+m-1)^*\right]^1,\cdots,\left[C(j)^*\right]^1,\cdots,\\ F(j+m-1,j)^*\left[C(j+m-1)^*\right]^q,\cdots,\left[C(j)^*\right]^q\Big],
\end{gathered}
\end{equation}
whose columns are the $m_{oq}$ snapshots obtained through the $mq$ impulse-response simulations described above. Then 
\begin{equation}
Y(j;m)Y(j;m)^*=W_{oe}(j;m).
\label{Gramian_Observability_Empirical Factor Y}
\end{equation}
\label{Prop Empirical observb gramian}
\end{prop}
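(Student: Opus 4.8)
The plan is to trace the adjoint-system simulations explicitly to write each snapshot in closed form, reindex the resulting collection of columns, and then recognize $Y(j;m)Y(j;m)^*$ as the defining sum~(\ref{Gramian_Observability_Empirical}) of $W_{oe}(j;m)$ after inserting a resolution of the identity.

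First I would unwind the recursion for the $r$-th simulation of the adjoint system~(\ref{adjoint system}), $r=1,\dots,m$, which starts from $z(j+r-1)=0$ with impulse $v(j+r-1)=e_d$ (the $d$-th standard basis vector of $\mathbb{C}^q$) and runs $m-r+1$ steps to reach time $j+m$. Since $\hat{C}(j+r-1)=C\big(2j+m-(j+r-1)-1\big)^*=C(j+m-r)^*$, the first step gives $z(j+r)=C(j+m-r)^*e_d$, and then applying $\hat A(k)=A(2j+m-k-1)^*$ for $k=j+r,\dots,j+m-1$ yields
\begin{align*}
z(j+m) &= \hat A(j+m-1)\cdots\hat A(j+r)\,z(j+r)\\
&= A(j)^*A(j+1)^*\cdots A(j+m-r-1)^*\,C(j+m-r)^*e_d.
\end{align*}
The key algebraic point is that taking adjoints reverses the order of a product, so $A(j)^*A(j+1)^*\cdots A(j+m-r-1)^* = \big(A(j+m-r-1)\cdots A(j+1)A(j)\big)^* = F(j+m-r,\,j)^*$. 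Hence the snapshot produced by the $r$-th simulation in direction $e_d$ equals $F(j+m-r,j)^*C(j+m-r)^*e_d$; checking $r=1$ and $r=m$ recovers the first and last entries listed in the definition of $Y(j;m)$.

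Next I would reindex by setting $i=j+m-r$, so that as $r$ runs over $1,\dots,m$ the index $i$ runs over $j,\dots,j+m-1$; comparing with the stated formula for $Y(j;m)$ (and recalling $[C(i)^*]^d=C(i)^*e_d$) confirms that the columns of $Y(j;m)$ are precisely the vectors $F(i,j)^*C(i)^*e_d$, $i=j,\dots,j+m-1$, $d=1,\dots,q$. Then
\begin{align*}
Y(j;m)Y(j;m)^* &= \sum_{i=j}^{j+m-1}\sum_{d=1}^{q} F(i,j)^*C(i)^*e_d e_d^* C(i)F(i,j)\\
&= \sum_{i=j}^{j+m-1} F(i,j)^*C(i)^*\Big(\sum_{d=1}^{q}e_d e_d^*\Big)C(i)F(i,j)\\
&= \sum_{i=j}^{j+m-1} F(i,j)^*C(i)^*C(i)F(i,j) = W_{oe}(j;m),
\end{align*}
using $\sum_{d=1}^{q}e_d e_d^* = I_q$ and the definition~(\ref{Gramian_Observability_Empirical}).

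I expect the only real obstacle to be bookkeeping in the first step: correctly handling the ``time-reversal'' index map $k\mapsto 2j+m-k-1$ built into $\hat A$ and $\hat C$, tracking start times and step counts, and verifying that the order of the matrix factors comes out right once adjoints are taken (this is exactly what turns the forward product of the $\hat A(k)$ into $F(j+m-r,j)^*$). Once the snapshots are in the closed form above, the reindexing and the $YY^*$ computation are routine. Alternatively, one could bypass the explicit simulation trace by noting that the adjoint system was constructed so that its empirical controllability Gramian coincides with $W_{oe}(j;m)$ and then invoking Proposition~\ref{Prop Empirical contrl gramian}; but the direct argument is self-contained and makes the correspondence transparent.
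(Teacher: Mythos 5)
Your proof is correct and follows essentially the same route as the paper, which establishes the proposition by tracing the adjoint impulse-response simulations to identify each snapshot as $F(i,j)^*\left[C(i)^*\right]^d$ and then reading off $Y(j;m)Y(j;m)^*$ as the defining sum for $W_{oe}(j;m)$. Your explicit unwinding of the time-reversal indexing in $\hat A(k)$ and $\hat C(k)$, and the use of $\sum_d e_d e_d^*=I_q$, simply make rigorous the computation the paper leaves as a direct verification.
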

 
Again, the above procedure is valid for general stable time-varying systems.  If the system is $T$-periodic, then as for the empirical controllability Gramian case, one only needs to run $Tq$ simulations to obtain the $mq$ snapshots.

\subsection{Balanced truncation using the method of snapshots}
\label{sec:balpod}
Suppose we have obtained the factors $X(j;m_c)$ and $Y(j;m_o)$ mentioned above, where $m_c,m_o\in\mathbb{N}$ may be different. Then for the \emph{lifted LTI system at time $j$}, by Proposition \ref{Prop Gramian Contrl Observ}, \ref{Prop Empirical contrl gramian} and \ref{Prop Empirical observb gramian}, its Gramians are approximated by 
\begin{equation}
	\begin{aligned}
	\tilde{W}_{jc}&\approx W_{ce}(j;m_c)=X(j;m_c)X(j;m_c)^*\\
	\tilde{W}_{jo}&\approx W_{oe}(j;m_o)=Y(j;m_o)Y(j;m_o)^*.
	\end{aligned}
	\label{approximation of Gramians}
\end{equation}
For LTI systems, the method of snapshots presented in~\cite{Ro2005} gives an algorithm for computing the transformation that exactly balances the empirical Gramians $W_{ce}$ and $W_{oe}$, directly from the factors $X(j;m_c)$ and $Y(j;m_o)$, without computing the Gramians themselves:

\begin{thm}{\bf (Balanced truncation using the method of snapshots~\cite{Ro2005})}
Let $\Sigma\in\mathbb{C}^{a\times a}$ be a real diagonal matrix including the non-zero Hankel singular values, obtained by singular value decomposition (SVD) of the matrix $Y(j;m_o)^*X(j;m_c)$ 
\begin{equation}
Y(j;m_o)^*X(j;m_c)=U\Sigma V^*,
\label{SVD}
\end{equation}
 in which $a$ is the rank of $Y(j;m_o)^*X(j;m_c)$, and $U\in\mathbb{C}^{m_oq\times a}$, $V\in\mathbb{C}^{m_cp\times a}$ satisfy $U^*U=V^*V=I_{a\times a}$. The balancing transformation is then found by computing matrices 
\begin{equation}
\Phi=X(j;m_c)V\Sigma^{-1/2}; \quad\quad \Psi=Y(j;m_o)U\Sigma^{-1/2}.
\label{T1 S1}
\end{equation}
If $a=n$, then  $\Phi$ is the balancing transformation and $\Psi^*$ is its inverse.  If $a<n$, then the columns of $\Phi$ form the first $a$ columns of the balancing transformation, and the rows of $\Psi^*$ form the first $a$ rows of the inverse transformation.  
\end{thm}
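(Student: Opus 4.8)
The plan is to reduce the assertion to three elementary identities coming from the singular value decomposition~(\ref{SVD}) and then to treat the two cases $a=n$ and $a<n$ separately. By Propositions~\ref{Prop Gramian Contrl Observ},~\ref{Prop Empirical contrl gramian}, and~\ref{Prop Empirical observb gramian}, the empirical Gramians of the lifted LTI system at time $j$ are available in \emph{factored} form, $W_{ce}(j;m_c)=X(j;m_c)X(j;m_c)^{*}$ and $W_{oe}(j;m_o)=Y(j;m_o)Y(j;m_o)^{*}$; writing $X=X(j;m_c)$ and $Y=Y(j;m_o)$, the claim becomes the purely linear-algebraic statement that $\Phi$ and $\Psi$ from~(\ref{T1 S1}) simultaneously diagonalize $XX^{*}$ and $YY^{*}$ in the sense recalled in Section~\ref{background_sec}. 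Everything will be driven by the single relation $Y^{*}X=U\Sigma V^{*}$, the orthonormality $U^{*}U=V^{*}V=I_a$, and the fact that $\Sigma$ has strictly positive diagonal (its entries are the \emph{non-zero} Hankel singular values), so that $\Sigma^{-1/2}$ is well defined.

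First I would establish the biorthogonality relation: substituting~(\ref{T1 S1}) and~(\ref{SVD}),
\[
\Psi^{*}\Phi=\Sigma^{-1/2}U^{*}(Y^{*}X)V\Sigma^{-1/2}=\Sigma^{-1/2}U^{*}(U\Sigma V^{*})V\Sigma^{-1/2}=I_a .
\]
Next I would compute the transformed Gramians. From $Y^{*}XX^{*}Y=(Y^{*}X)(Y^{*}X)^{*}=U\Sigma^{2}U^{*}$ one obtains $\Psi^{*}W_{ce}(j;m_c)\Psi=\Sigma^{-1/2}U^{*}(U\Sigma^{2}U^{*})U\Sigma^{-1/2}=\Sigma$, and symmetrically, from $X^{*}YY^{*}X=(Y^{*}X)^{*}(Y^{*}X)=V\Sigma^{2}V^{*}$ one obtains $\Phi^{*}W_{oe}(j;m_o)\Phi=\Sigma$. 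Hence, after the change of coordinates induced by $\Phi$ (with $\Psi^{*}$ playing the role of $\Phi^{-1}$), the controllability and observability Gramians both equal the diagonal matrix $\Sigma$, which is precisely the balancing condition.

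These identities dispatch the case $a=n$ at once: $\Psi^{*}\Phi=I_n$ with $\Phi$ square forces $\Phi$ invertible and $\Phi^{-1}=\Psi^{*}$, so that $\Phi^{-1}W_{ce}(j;m_c)(\Phi^{-1})^{*}=\Psi^{*}W_{ce}(j;m_c)\Psi=\Sigma=\Phi^{*}W_{oe}(j;m_o)\Phi$; thus $\Phi$ is a balancing transformation with inverse $\Psi^{*}$. The remaining work, which I expect to be the main obstacle, is the case $a<n$: one must produce an invertible $n\times n$ matrix whose first $a$ columns are the columns of $\Phi$ and the first $a$ rows of whose inverse are the rows of $\Psi^{*}$, in such a way that truncation to the first $a$ coordinates reproduces the reduced model built from $\Phi$ and $\Psi$. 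I would do this by completing $\Phi,\Psi$ to biorthogonal square matrices $[\,\Phi\ \Phi_2\,]$ and $[\,\Psi\ \Psi_2\,]$, choosing the extra columns in a complement on which $W_{ce}$ and $W_{oe}$ block-diagonalize against the leading block; the delicate step is verifying that the lower-right blocks carry only \emph{zero} Hankel singular values. This is consistent with the mode count: the non-zero eigenvalues of $W_{ce}(j;m_c)W_{oe}(j;m_o)=XX^{*}YY^{*}$ coincide with those of $(Y^{*}X)^{*}(Y^{*}X)$, i.e.\ with the numbers $\sigma_i^{2}$, so $a$ is exactly the number of non-zero Hankel singular values and the truncation to the first $a$ coordinates discards only vanishing ones. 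Consequently the reduced model obtained from the columns of $\Phi$ (trial) and $\Psi$ (test) is exactly the balanced truncation.
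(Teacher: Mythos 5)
The paper itself gives no proof of this theorem---it is imported verbatim from~\cite{Ro2005}---so there is no in-text argument to compare against; your proposal must be judged on its own. Its core is correct and is exactly the standard argument: the identities $\Psi^{*}\Phi=I_a$, $\Psi^{*}W_{ce}(j;m_c)\Psi=\Sigma$, and $\Phi^{*}W_{oe}(j;m_o)\Phi=\Sigma$ all follow from the SVD as you compute them, and they settle the case $a=n$ completely. For $a<n$ you only sketch the completion and single out the block-diagonalization as the ``delicate step,'' but that step is in fact automatic and requires no special choice of complement: since $X^{*}\Psi=V\Sigma^{1/2}$ one gets $XX^{*}\Psi=XV\Sigma^{1/2}=\Phi\Sigma$, and symmetrically $YY^{*}\Phi=\Psi\Sigma$, so for \emph{any} biorthogonal completion $[\Phi\ \Phi_2]$, $[\Psi\ \Psi_2]$ (which exists because $\Psi^{*}\Phi=I_a$ forces $\mathbb{C}^n=\operatorname{range}\Phi\oplus(\operatorname{range}\Psi)^{\perp}$, so one may take the columns of $\Phi_2$ to span $(\operatorname{range}\Psi)^{\perp}$) the off-diagonal blocks $\Psi_2^{*}W_{ce}\Psi$ and $\Phi_2^{*}W_{oe}\Phi$ vanish identically. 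Your rank count via the cyclic-eigenvalue argument then correctly shows that the trailing block carries only zero Hankel singular values, which is all the theorem asserts about the truncated coordinates. In short: the proposal follows the same route as the cited source, the computations are right, and the one step you defer is easier than you anticipate; I would simply ask you to write out the two displayed identities above so the $a<n$ case is closed rather than left conditional.
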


We emphasize that computing the balancing transformation by the method of snapshots {\em exactly} balances the empirical Gramians.  The only approximation here is to use $X$ and $Y$ to approximately construct the Gramians as in (\ref{approximation of Gramians}). 

Balanced truncation of order $r$ is then done as follows.  Let $\Phi_{1}$ denote the first $r$ columns of $\Phi$, and $\Psi_{1}^*$ the first $r$ rows of $\Psi^*$. 
The reduced state variable $\tilde{z}_j(t)\in\mathbb{C}^r$ satisfies $\tilde{z}_j(t)=\Psi_{1}^*\tilde{x}_j(t)=\Psi_{1}^*x(j+tT)$.  The reduced model, in the lifted setting, is then given by
\begin{align}
\tilde{z}_j(t+1)&=\Psi_{1}^*\tilde{A}_j\Phi_{1}\tilde{z}_j(t)+\Psi_{1}^*\tilde{B}_j\tilde{u}_j(t);
\label{Reduced system part1}\\
\tilde{y}_j(t)&=\tilde{C}_j\Phi_{1}\tilde{z}_j(t)+\tilde{D}_j\tilde{u}_j(t),
\label{Reduced system part2}
\end{align}
In simulations, the reduced output equation (\ref{Reduced system part2}) shall be un-lifted to the original periodic setting: For each $i$, $i=1,\cdots T$,
\begin{equation}
\begin{aligned}
y(j+tT+i-1)&=C(j+i-1)F(j+i-1,j)\Phi_{1}\tilde{z}_j(t)\\
&\quad+\sum_{k=1}^{T}\tilde{D}_{j(i,k)}u(j+tT+k-1)
\end{aligned}
\label{Reduced system part2_periodic setting}
\end{equation}
where $\tilde{D}_{j(i,k)}$ denotes the entry of $\tilde{D}_j$ at $i$-th row and $k$-th column.

\subsection{Output projection method}
\label{sec:outputproj}
When the number of outputs $q$ is very large, direct construction of $Y(j;m_o)$ as described in Section~\ref{sec:obsv_gram} will be computationally intractable, because $Tq$ adjoint simulations are required. To overcome this, an \emph{output projection method}~\cite{Ro2005} may be used, by which one can substantially reduce the number of adjoint simulations. The starting point of this method is to define an optimization problem that minimizes the error between the input-output behavior of the original system and that of a projected system with a smaller-dimensional output space.  For a time-periodic system, this optimization problem is cumbersome to define directly.  We instead consider the lifted LTI system, for which we design a version of output projection, and then relate the method back to the periodic system.

Fix a time~$j$.  The lifted system at time~$j$, though with an even higher dimension of output~$Tq$, is a standard LTI system, and its input-output behavior can be measured by a sequence of $Tq\times Tp$ dimensional impulse-response matrices $\{\tilde{G}_j(t)\}$, the $i$-th column of each $\tilde{G}_j(t)$ representing  the output response $\tilde{y}_j(t)$ corresponding to a unit impulse input $\tilde{u}_j(0)=[0,\cdots,0,1,0\cdots,0]^{\top}$ whose $i$-th entry is $1$.   By output projection \cite{Ro2005}, we mean 
the search of an orthogonal projection on $\mathbb{C}^{Tq}$ with rank $\tilde{r}_{op}\ll Tq$, i.e. a $\tilde{P}_j=\tilde{\Theta}_j\tilde{\Theta}_j^*$ where $\tilde{\Theta}_j\in\mathbb{C}^{Tq\times \tilde{r}_{op}}$, $\tilde{\Theta}_j^*\tilde{\Theta}_j=I_{\tilde{r}_{op}\times \tilde{r}_{op}}$,  that leads to a \emph{projected lifted system at time $j$}
\begin{align}
\tilde{x}_j(t+1)&=\tilde{A}_j\tilde{x}_j(t)+\tilde{B}_j\tilde{u}_j(t);
\label{LTI part1 projected}\\
\tilde{y}_j(t)_{P}&=\tilde{P}_j\left(\tilde{C}_j\tilde{x}_j(t)+\tilde{D}_j\tilde{u}_j(t)\right),
\label{LTI part2 projected}
\end{align}
which is an approximation of the original lifted system, such that only $\tilde{r}_{op}$ adjoint simulations of the corresponding adjoint system are needed for calculation of the empirical observability Gramian of the projected system.  More details will be discussed soon. The $\tilde{P}_j$ shall be chosen such that the input-output behavior of the projected system is as close as possible to that of the original LTI system.   More precisely, $\tilde{P}_j$  is the solution of the optimization problem 
\begin{equation}
\min_{\substack{\{\tilde{P}\in \mathcal{P}_{\tilde{r}_{op}}\}}}\left(\sum_{t=0}^{\infty}||\tilde{G}_j(t)-\tilde{P}_j\tilde{G}_j(t)||^2\right)
\label{Error_outputProjection_lifted}
\end{equation}
with respect to some norm on matrices, where $\mathcal{P}_{r}$ denotes a space of rank-$r$ orthogonal projections.  If we use an induced norm, such as the Frobenius norm $||\cdot||_F$ induced by the inner product $\langle A, B\rangle=\operatorname{Trace}(A^*B)$, then the minimization problem has a standard solution: $\tilde{P}_j= \tilde{\Theta}_j \tilde{\Theta}_j^*$,  where $\tilde{\Theta}_j=\left[\tilde{\Theta}_j^1,\cdots,\tilde{\Theta}_j^{\tilde{r}_{op}}\right]$ in which the column vectors $\{\tilde{\Theta}_j^i\}$ are the orthonormal eigenvectors of $\mathcal{R}=\sum_{i=0}^{\infty}\tilde{G}_j(i)\tilde{G}_j(i)^*$.    
It is a typical eigenvalue problem in POD reduction and can be numerically solved by the method of snapshots \cite{Si1987}, where the snapshots are provided by the data sets $\{\tilde{G}_j(i)\}_{i=0}^{s}$ obtained through simulations.   

The above output projection generally produces a full matrix $\tilde{P}_j$. Thus the components of the projected $\tilde{y}_j(t)_P=\tilde{P}_j\tilde{y}_j(t)$ no longer cleanly correspond to the outputs of the periodic system at different time steps respectively, but the intermixed combinations of them, which is not desirable both for physical understanding of the system, and for numerical simulation purposes (recall that in simulations we do not really compute with the lifted system, so the projected lifted system should be `unlifted' back to a periodic system for computation).   To deal with this difficulty, we propose a modified version of output projection, in which we seek a sub-optimal solution that solves (\ref{Error_outputProjection_lifted}) with a constraint imposing that the orthogonal projection $\tilde{P}_j$ with rank $\tilde{r}_{op}$ takes a diagonal form      
\begin{equation}
\tilde{P}_j=\operatorname{diag}\left[\tilde{P}_j(1),\cdots,\tilde{P}_j(T)\right]
\label{P_Rtilde constrained}
\end{equation}
where each $q\times q$ block  is an orthogonal projection on $\mathbb{C}^q$ with rank $r_{op}$ in form of $\tilde{P}_j(i)=\tilde{\Theta}_j(i)\tilde{\Theta}_j(i)^*$ where $\tilde{\Theta}_j(i)\in\mathbb{C}^{q\times r_{op}}$ and $\tilde{\Theta}_j(i)^*\tilde{\Theta}_j(i)=I_{r_{op}\times r_{op}}$.  Note that in this case
\begin{equation}
\tilde{\Theta}_j=\operatorname{diag}\left[\tilde{\Theta}_j(1),\cdots,\tilde{\Theta}_j(T)\right].
\label{Phi_Rtilde constrained}
\end{equation}
Here we need $\tilde{r}_{op}=r_{op}T$.

The diagonal $\tilde{P}_j$ makes it easy to unlift the projected lifted system (\ref{LTI part1 projected}) \& (\ref{LTI part2 projected}) to a \emph{projected time-periodic system  }
\begin{align}
x(k+1)&=A(k)x(k)+B(k)u(k);
\label{LPS projected part1}
\\
y(k)_P&=P(k)C(k)x(k)
\label{LPS projected part2}
\end{align}
for $k=j,\cdots$, where the $T$-periodic orthogonal projection $P$ of rank $r_{op}$ on $\mathbb{C}^q$ is defined by 
\begin{equation}
\begin{aligned}
P(j+tT+i)=P(j+i):=\tilde{P}_j(i+1), \quad &i=0,\cdots,T-1,\\ 
&w=0,1,\cdots.
\end{aligned}
\label{P def}
\end{equation}
We write $P(k)=\Theta(k)\Theta(k)^*$ where the $T$-periodic $\Theta\in\mathbb{C}^{q\times r_{op}}$ is defined by $\Theta(j+tT+i)=\Theta(j+i):=\tilde{\Theta}_j(i+1)$. 

By the above unlifting, though solving for the sub-optimal $\tilde{P}_j$ is no longer a standard POD reduction problem in the lifted setting, we can attack it in the periodic setting such that standard POD reduction techniques are available.  

To realize that, first we rewrite $\tilde{G}_j(t)$ as
\begin{equation}
\tilde{G}_j(t)=\begin{bmatrix}
\tilde{G}_j(t)_1\\
\vdots\\
\tilde{G}_j(t)_T
\end{bmatrix}
\label{Gtilde_blocks}
\end{equation}
where each block $\tilde{G}_j(t)_i$ is a $q\times Tp$ matrix.  Let $0\leqslant b\leqslant (T-1)$, $1\leqslant c\leqslant p$.  The $(bp+c)$-th column of $\tilde{G}_j(t)_i$ represents the output response of the original periodic system at time $(j+tT+i-1)$ to the unit impulse $u(j+b)=[0,\cdots,0,1,0,\cdots, 0]^{\top}$ whose $c$-th component is~$1$.  Thus, for the periodic system (\ref{LPS}), we define 
\begin{equation}
G(j+tT+i,j):=\tilde{G}_j(t)_{i+1}
\label{G matrix def}
\end{equation}
as its impulse-response matrix at time $(j+tT+i)$, since it includes all different responses at the current time respectively to corresponding unit impulse inputs during the whole time period $[j,(j+T-1)]$.  This definition matches that proposed in Bamieh and Pearson 1992 \cite{BaPe1992}. 

The following statement links the constrained optimization problem given above for the lifted system to an equivalent optimization problem defined in the periodic system.

\begin{prop}
Under the Frobenius norm, the optimization problem (\ref{Error_outputProjection_lifted}), with its solution $\tilde{P}_j$ constrained in form of (\ref{P_Rtilde constrained}), is equivalent to  the optimization problem
\begin{equation}
\begin{aligned}
\min_{\substack{\{P(j+i){\in \mathcal{P}_{r_{op}}},\\i=0,\cdots,T-1\}}}
&\left(\sum_{i=0}^{T-1}\sum_{t=0}^{\infty}\Big\|G(j+tT+i,j)\right.\\
&\left.-P(j+i)G(j+tT+i,j)\Big\|^2\right),
\label{Error_outputProjection_periodic}
\end{aligned}
\end{equation}
which implies, for each $i=0,\cdots,T-1$, $P(j+i)$ is the solution of the optimization problem  
\begin{equation}
\begin{aligned}
\min_{\substack{\{P(j+i){\in \mathcal{P}_{r_{op}}}\}}}
&\left(\sum_{t=0}^{\infty}\Big\|G(j+tT+i,j)\right.\\
&\left.-P(j+i)G(j+tT+i,j)\Big\|^2\right).
\label{Error_outputProjection_periodic unconstrained}
\end{aligned}
\end{equation}
\label{Prop Optimization}
\end{prop}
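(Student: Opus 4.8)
The plan is to exploit the fact that the (squared) Frobenius norm is additive over the blocks of a block-partitioned matrix, together with the block-diagonal constraint (\ref{P_Rtilde constrained}) on $\tilde{P}_j$ and the row-block decomposition (\ref{Gtilde_blocks}) of $\tilde{G}_j(t)$. The whole argument is essentially this one observation carried through the index bookkeeping; the optimality of each decoupled subproblem is then the standard POD statement already quoted around (\ref{Error_outputProjection_lifted}).

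First I would note that, for each fixed $t$, (\ref{P_Rtilde constrained}) and (\ref{Gtilde_blocks}) give immediately
\[
\tilde{G}_j(t)-\tilde{P}_j\tilde{G}_j(t)=\begin{bmatrix}\tilde{G}_j(t)_1-\tilde{P}_j(1)\tilde{G}_j(t)_1\\ \vdots\\ \tilde{G}_j(t)_T-\tilde{P}_j(T)\tilde{G}_j(t)_T\end{bmatrix}.
\]
Taking Frobenius norms and using $\|M\|_F^2=\sum_{i=1}^T\|M_i\|_F^2$ for a row-block partition $M=[M_1^{\top}\,\cdots\,M_T^{\top}]^{\top}$, I get $\|\tilde{G}_j(t)-\tilde{P}_j\tilde{G}_j(t)\|_F^2=\sum_{i=1}^T\|\tilde{G}_j(t)_i-\tilde{P}_j(i)\tilde{G}_j(t)_i\|_F^2$. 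Summing over $t\geqslant 0$ and substituting the definitions $G(j+tT+i,j):=\tilde{G}_j(t)_{i+1}$ from (\ref{G matrix def}) and $P(j+i):=\tilde{P}_j(i+1)$ from (\ref{P def}) (with the re-indexing $i\mapsto i+1$), the objective of (\ref{Error_outputProjection_lifted}) becomes exactly the double sum appearing in (\ref{Error_outputProjection_periodic}). Since a block-diagonal rank-$\tilde{r}_{op}$ orthogonal projection of the form (\ref{P_Rtilde constrained})--(\ref{Phi_Rtilde constrained}) corresponds bijectively to a $T$-tuple $(P(j),\dots,P(j+T-1))$ of rank-$r_{op}$ orthogonal projections on $\mathbb{C}^q$ (here block-diagonality makes the product of projections a projection and the ranks add, consistent with $\tilde{r}_{op}=r_{op}T$), minimizing over the constrained $\tilde{P}_j$ is the same as minimizing the double sum over $(P(j),\dots,P(j+T-1))$, i.e.\ (\ref{Error_outputProjection_periodic}). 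For the reduction to the decoupled problems (\ref{Error_outputProjection_periodic unconstrained}), I would simply observe that the double sum has the separable form $\sum_{i=0}^{T-1}f_i\big(P(j+i)\big)$, where $f_i\big(P(j+i)\big):=\sum_{t=0}^{\infty}\|G(j+tT+i,j)-P(j+i)G(j+tT+i,j)\|^2$ depends only on the single projection $P(j+i)$; minimizing a sum of nonnegative terms over independent variables is achieved termwise.

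The main obstacle I anticipate is bookkeeping rather than conceptual: keeping the two index conventions consistent (lifted blocks numbered $1,\dots,T$ in (\ref{Gtilde_blocks}) versus periodic times $j,j+1,\dots,j+T-1$, with the offset $i\leftrightarrow i+1$ built into (\ref{G matrix def}) and (\ref{P def})), and verifying carefully that the feasible sets match — namely that the block-diagonal rank-$r_{op}T$ orthogonal projections are precisely the images of the $T$-tuples of rank-$r_{op}$ orthogonal projections on $\mathbb{C}^q$. I would also remark that the interchange of the finite sum over $i$ with the infinite sum over $t$ is harmless, and that convergence of $\sum_{t}$ is guaranteed by the asymptotic stability assumed of (\ref{LPS}); neither point raises a genuine difficulty.
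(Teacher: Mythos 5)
Your proposal is correct and follows essentially the same route as the paper, whose proof is simply the remark ``by direct calculation, using (\ref{P_Rtilde constrained}), (\ref{Gtilde_blocks}), (\ref{P def}), (\ref{G matrix def}) and the linearity of the $\operatorname{Trace}$ operation'' --- your block-wise additivity of the squared Frobenius norm is exactly that linearity of the trace, and the termwise minimization of the separable sum is the same final step. Nothing is missing; you have merely written out the bookkeeping the paper leaves implicit.
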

\begin{proof}
By direct calculation, using (\ref{P_Rtilde constrained}), (\ref{Gtilde_blocks}), (\ref{P def}), (\ref{G matrix def}) and the linearity of the $\operatorname{Trace}$ operation.
\end{proof}

This statement allows us to obtain the $\tilde{P}_j$ in form of~(\ref{P_Rtilde constrained}) by solving $T$ unconstrained optimization problems for $P(k)=\Theta(k)\Theta(k)^*$,  $k=j,\cdots,j+T-1$, in the periodic setting, each of which is a typical eigenvalue problem in POD reduction with solutions satisfying that the $r_{op}$ columns of each $\Theta(k)$, $\{\theta(k)^l\}_{l=1}^{r_{op}}$, are orthonormal  eigenvectors of $\mathcal{R}(k)=\sum_{t=0}^{\infty}G(tT+k,j)G(tT+k,j)^*$, i.e., they satisfy
\begin{equation} 
R(k)\theta(k)^{l}=\lambda_l\theta(k)^{l}.
\label{eigenvalue problem POD}
\end{equation}

Numerically, the eigenvectors can be solved by the method of snapshots, in which $T$ SVDs are needed to obtain $\Theta(j)$, $\cdots$, $\Theta(j+T-1)$.  See details of this method in, for example,  \cite{Si1987} and \cite{Ro2005}.  The snapshots are the columns of impulse-response matrices $\{G(j+tT+i,j)\}_{t=0}^{s}$ for $i=0,\cdots,T-1$.

A convenient computational feature of the output projection method is that by periodicity we see all the snapshots have already been obtained during the computation of $X(j;m_c)$ described in Section~\ref{sec:ctrb_gram}, as long as $m_c\geqslant (s+1)T$, except that the data obtained there needs to be left-multiplied with a corresponding $C$ matrix.  For instance, the matrix $C(j)X(j;m_c)$ includes all the columns of matrices $G(j+tT,j)$ for $w=1,\cdots, m_c/T$.

Suppose we have found the $\tilde{P}_j$ in form of (\ref{P_Rtilde constrained}).  By definition, the observability Gramian of the projected lifted system at time $j$ is
\begin{equation}
\tilde{W}_{joP}=\sum_{i=0}^{\infty}\left(\tilde{A}_j^i\right)^*\tilde{C}_j^*\tilde{\Theta}_j\tilde{\Theta}_j^*\tilde{C}_j\tilde{A}_j^i,
\label{Gramian_Observability_Projected_liftedSystem}  
\end{equation}
and the observability Gramian at time $j$ for the projected periodic system is 
\begin{equation}
W_{oP}(j)=\sum_{i=j}^{\infty}F(i,j)^*C(i)^*\Theta(i)\Theta(i)^*C(i)F(i,j).
\label{Gramian_Observability_Projected periodciSystem}  
\end{equation}

The following statement for the projected systems is an analog to Proposition \ref{Prop Gramian Contrl Observ} for the full case. Its proof is by direct calculation. 

\begin{prop}
The observability Gramian for the projected lifted system at time $j$, $\tilde{W}_{joP}$, is equal to the observability Gramian at time $j$ for the projected periodic system, $W_{oP}(j)$. 
\label{Prop Gramian Observ projected}
\end{prop}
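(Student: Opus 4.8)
The plan is to reduce the claim to Proposition~\ref{Prop Gramian Contrl Observ} rather than redo the monodromy-index bookkeeping from scratch. First, note that the projected periodic system (\ref{LPS projected part1})--(\ref{LPS projected part2}) is again a system of the form (\ref{LPS}), with the same $A(k)$ and $B(k)$ but a new output matrix $\check{C}(k):=\Theta(k)^{*}C(k)$, which is $T$-periodic because $\Theta$ is (by construction via (\ref{P def})). Since $P(k)=\Theta(k)\Theta(k)^{*}$ with $\Theta(k)^{*}\Theta(k)=I$, the summand in (\ref{Gramian_Observability_Projected periodciSystem}) is exactly $F(i,j)^{*}\check{C}(i)^{*}\check{C}(i)F(i,j)$, so $W_{oP}(j)$ is nothing but the observability Gramian $W_{o}(j)$ of the periodic system with output $\check{C}$. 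Likewise, (\ref{Gramian_Observability_Projected_liftedSystem}) rewrites as $\sum_{i\ge 0}(\tilde{A}_{j}^{i})^{*}(\tilde\Theta_{j}^{*}\tilde{C}_{j})^{*}(\tilde\Theta_{j}^{*}\tilde{C}_{j})\tilde{A}_{j}^{i}$, i.e.\ the lifted observability Gramian $\tilde{W}_{jo}$ of a lifted system with the same $\tilde{A}_{j}$ and output matrix $\tilde\Theta_{j}^{*}\tilde{C}_{j}$.

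The key step is then to verify that $\tilde\Theta_{j}^{*}\tilde{C}_{j}$ is precisely the lifted output matrix obtained by applying the $\tilde{C}_{j}$ construction of Section~\ref{background_sec} to the periodic output $\check{C}$. This is where the diagonal constraint matters: by (\ref{Phi_Rtilde constrained}) and (\ref{P def}), $\tilde\Theta_{j}$ is block-diagonal with blocks $\tilde\Theta_{j}(i+1)=\Theta(j+i)$, so left-multiplying the block-row matrix $\tilde{C}_{j}$, whose $(i+1)$-st block is $C(j+i)F(j+i,j)$, by $\tilde\Theta_{j}^{*}$ yields a block-row matrix whose $(i+1)$-st block is $\Theta(j+i)^{*}C(j+i)F(j+i,j)=\check{C}(j+i)F(j+i,j)$ --- exactly the $(i+1)$-st block of the $\tilde{C}$-lifting of $\check{C}$. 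Since $A(k)$ is unchanged, the monodromy matrix $\tilde{A}_{j}=F(j+T,j)$ and hence stability are unchanged, and $\check{C}$, $\Theta$ are $T$-periodic, so Proposition~\ref{Prop Gramian Contrl Observ} applied to the periodic system with output $\check{C}$ yields $\tilde{W}_{jo}=W_{o}(j)$, that is, $\tilde{W}_{joP}=W_{oP}(j)$.

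The main obstacle, and essentially the only nontrivial content, is this block-diagonal bookkeeping: it is exactly the diagonal form imposed on $\tilde{P}_{j}$ in Section~\ref{sec:outputproj} that makes $\tilde\Theta_{j}^{*}\tilde{C}_{j}$ compatible with the lifting operation, whereas a general full $\tilde{P}_{j}$ would mix the time-slices and $\tilde{P}_{j}\tilde{C}_{j}$ would not be the lifting of any periodic output map. If one prefers not to invoke Proposition~\ref{Prop Gramian Contrl Observ} as a black box, the same result follows by expanding $\tilde{C}_{j}^{*}\tilde\Theta_{j}\tilde\Theta_{j}^{*}\tilde{C}_{j}$ as a sum of $T$ terms over the block index $k=0,\dots,T-1$, using $\tilde{A}_{j}^{i}=F(j+T,j)^{i}=F(j+iT,j)$ and $F(j+iT+k,j+iT)F(j+iT,j)=F(j+iT+k,j)$ together with the $T$-periodicity of $C$ and $\Theta$, and then reindexing $\sum_{i\ge 0}\sum_{k=0}^{T-1}$ as a single sum over $i'=j+iT+k\ge j$ to recover (\ref{Gramian_Observability_Projected periodciSystem}).
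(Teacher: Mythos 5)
Your proposal is correct. The paper itself offers no written argument beyond the remark that the proof ``is by direct calculation'' (and that the proposition is the analog of Proposition~\ref{Prop Gramian Contrl Observ}), so there is little to compare against line by line; your main route --- recognizing the projected periodic system as an instance of (\ref{LPS}) with output matrix $\check{C}(k)=\Theta(k)^{*}C(k)$, checking that the block-diagonal $\tilde\Theta_{j}^{*}$ turns $\tilde{C}_{j}$ into exactly the lifting of $\check{C}$, and then invoking Proposition~\ref{Prop Gramian Contrl Observ} --- is a clean structural packaging of that direct calculation, and your closing paragraph (expand over the block index $k$, use $\tilde{A}_{j}^{i}=F(j+iT,j)$ and the composition property of $F$, reindex the double sum as a single sum over $i'\geqslant j$) is precisely the computation the paper has in mind. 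The one genuine piece of added value in your write-up is the explicit observation that the diagonal constraint (\ref{P_Rtilde constrained}) is what makes $\tilde\Theta_{j}^{*}\tilde{C}_{j}$ the lifting of a periodic output map at all; a full $\tilde{P}_{j}$ would not unlift, which is exactly the motivation the paper gives for imposing that constraint in Section~\ref{sec:outputproj}. All steps check out ($\Theta(k)^{*}\Theta(k)=I$ collapses the two projections in the summand, and $T$-periodicity of $\Theta$ follows from (\ref{P def})), so no gap.
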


Therefore, $\tilde{W}_{joP} = W_{oP}(j)\approx W_{oPe}(j;m_o)$, where $W_{oPe}(j;m_o)$ is the empirical observability Gramian at time $j$ of the projected periodic system obtained by the method of snapshots introduced in  Section 3.1.  Now  the input is only $r_{op}$ dimensional, $r_{op}\ll q$, in the corresponding \emph{projected adjoint time-periodic system}
\begin{equation}
z(k+1)=\hat{A}(k)z(k)+\hat{C}_P(k)v_{r_{op}}(k)
\label{adjoint system_projected}
\end{equation}
where $k=j,\cdots,j+m_o-1$, the state $z(k)\in\mathbb{C}^n$, the control input $v_{r_{op}}(k)\in\mathbb{C}^{r_{op}}$, and 
\begin{align*}
\hat{A}(k)&=A(2j+m_o-k-1)^*,\\
\hat{C}_P(k)&=C(2j+m_o-k-1)^*\Theta(2j+m_o-k-1).
\end{align*}
Thus, only $Tr_{op}$ adjoint simulations in total is needed for computing $W_{eoP}(j;m_o)$.  

\begin{rem}
{\rm Instead of seeking sub-optimal solutions for (\ref{Error_outputProjection_lifted}) in form of (\ref{P_Rtilde constrained}), (\ref{Phi_Rtilde constrained}), where each $\tilde{\Theta}_j(i)=\Theta(j+i-1)$, $i=1,\cdots, T$, are different and given by the solutions of (\ref{eigenvalue problem POD}), one can also alternatively impose a stronger constraint that all the projection matrices at each time step are the same, which means  $\Theta(j)=\Theta(j+1)=\cdots=\Theta(j+T-1):=\Theta$, and   
\begin{equation}
\tilde{\Theta}_j=\operatorname{diag}\left[\Theta,\cdots,\Theta\right].
\label{Phi_Rtilde constrained_one Phi}
\end{equation}
It is straightforward to show that, similar to (\ref{eigenvalue problem POD}), the single $\Theta$ is obtained by solving  the eigenvalue problem
\begin{equation}
R\theta^{l}=\lambda_l\theta^{l},
\label{eigenvalue problem POD one Phi}
\end{equation}
where $\{\theta^l\}_{l=1}^{r_{op}}$ are the columns of $\Theta$, and $R=\sum_{k=j}^{j+T-1}\sum_{t=0}^{\infty}G(tT+k,j)G(tT+k,j)^*$.  The $\Theta$ obtained by this setting reflects the overall impulse-responses of the periodic system, mixing those at different time steps along each time period.  The stronger constraint given here let us expect that the solution is even less optimal.  However, a numerical advantage is that only one SVD is needed in the method of snapshots for solving the single $\Theta$. 
}
\label{remark single projection}
\end{rem}
\vskip10pt

\subsection{Summary: procedures of balanced POD }
Following the terminology in \cite{Ro2005}, the approximate balanced truncation based on the method of snapshots given in Section \ref{snapshot_subsec} \& \ref{sec:balpod}, plus the output projection introduced in Section~\ref{sec:outputproj},  is named \emph{balanced POD}.  We summarize the computational procedures of the balanced POD method for linear asymptotically stable time-periodic systems:
\begin{itemize}
\item{Step 0: Pick a time $j$, $1\leqslant j\leqslant T$, as the ``base point,'' based on which the balanced truncation will be done.}
\item{Step 1: Run $Tp$ impulse-response simulations to obtain $m_c p$ snapshots and form $n\times m_c p$ dimensional $X(j;m_c)$ as described in Section~\ref{snapshot_subsec}.}
\item{Step 2: For each $(j+i)$, $i=0,\cdots,T-1$ (corresponding to one whole period), left-multiply the state responses stored during computing $X(j;m_c)$ by corresponding $C$ matrices, and then use the method of snapshots to solve the $T$ eigenvalue problems defined by (\ref{eigenvalue problem POD}) for the $T$ output projection matrices $P(j+i)$ along one time period.}
\item{Step 3: Construct the ``projected adjoint system''~(\ref{adjoint system_projected}) and run $Tr_{op}$ impulse-response simulations for the adjoint system to form $n\times m_o r_{op}$ dimensional $Y(j;m_o)$, as described in Section~\ref{snapshot_subsec}}
\item{Step 4: Compute the SVD of $Y(j;m_o)^*X(j;m_c)$ defined in (\ref{SVD}) and compute the balancing POD modes for the lifted system given by (\ref{T1 S1}).}
\item{Step 5: Obtain the reduced lifted system in form of (\ref{Reduced system part1}) \&~(\ref{Reduced system part2}). For computational purposes, rewrite the output equation (\ref{Reduced system part2}) as (\ref{Reduced system part2_periodic setting}). }
\end{itemize}

If the dimension of output $q$ is small, then we can skip Step 2 for output projection, and directly run $Tq$ impulse-response simulations for the adjoint system (\ref{adjoint system}) as in Section 3.1 to form the $n\times m_o q$ dimensional $Y(j;m_o)$.    On the other hand, in Step 2, instead of solving for the time varying $T$-periodic output projection matrices,  one can alternatively seek a single time-invariant projection matrix by solving the eigenvalue problem (\ref{eigenvalue problem POD one Phi}) as we mentioned in Remark \ref{remark single projection}. 

\begin{rem}
{\rm Note that to pick up an optimal base time $j$ in the sense that the upper error bound of the reduced system of a fixed order is minimized,  one needs to solve for the Hankel singular value matrices $\Sigma$ corresponding to the lifted systems at different times during one whole period (see Farhood et al.~\cite{FaBeDu2005}). This method is therefore not attractive or even computationally feasible for large systems.} 
\label{remark optimal base time}
\end{rem}

\begin{rem}
{\rm 
A ``dual'' version of the above algorithm is readily available for balanced truncations of linear periodic systems with high-dimensional states and inputs, but only few outputs.  In that case, one can start with the construction of $Y(j;m_o)$ by running a series of impulse-response simulations for the adjoint system, since the dimension of outputs is small.  Then an \emph{input projection} based on POD reduction shall be done in the same spirit as that underlies the output projection to obtain a projected system whose dimension of inputs is reasonably small. One then runs a corresponding number of impulse-response simulations for the projected system to construct $X(j;m_c)$.
}
\label{remark input projection}
\end{rem}

\section{Example}
\label{example_sec}

To validate and demonstrate the balanced POD algorithm, we consider the following example.  Consider a linear periodic system~(\ref{LPS}) with period $T=5$, state dimension $n=30$, output dimension $q=30$, control input dimension $p=1$, and $\{A(k)\}_{k=1}^{5}$ are randomly generated diagonal matrices with diagonal entries bounded in $[0.16, 0.96]$, guaranteeing the asymptotical stability.  $\{B(k)\}$ and $\{C(k)\}$ matrices are also randomly generated, with entries bounded in $[0,1]$.  The setting is similar to that used in the example in Farhood, et. al \cite{FaBeDu2005}.   It is not a high-dimensional problem, such that exact balanced realizations for the corresponding lifted system can be done by solving Lyapunov equations for Gramians of the lifted system, and the result can thus be compared with that by the balanced POD approach. 

We pick the ``base time'' $j=1$.  Choose $m_c=m_o=2T=10$, and test different cases of balanced truncation, including balanced PODs with the order of output projection for the periodic system at each time step $r_{op}=1, 2, 6$ and $10$ respectively. Recall that, to the lifted system, the order of output projection is  $\tilde{r}_{op}=r_{op}T$.  Figure \ref{Figure Hankel Singular Values} shows the Hankel singular values in the lifted setting, obtained by the exact balanced realization, the balanced truncation based on the method of snapshots given in Section~\ref{sec:balpod} but without output projection, and the balanced POD (with $T$ different output projection matrices along one period for the periodic system).  Figure \ref{Figure G inf norm} shows the error plots of the infinity norm, $||\tilde{G}-\tilde{G}_{\tilde{r}}||_\infty/||\tilde{G}||_\infty$ versus $\tilde{r}$, for those different cases. Here $\tilde{G}_{\tilde{r}}$ is the impulse-response matrix of the reduced lifted system of order $\tilde{r}$.  We do not show the balanced POD with $r_{op}=10$ case since the result is almost identical to that by balanced truncation based on the method of snapshots but without output projection. We see that the balanced truncation based on the method of snapshots gives a good approximation of the exact balanced truncation, and further, the balanced POD, even with low orders of output projection $r_{op}$, generates satisfying results.  Figure \ref{Figure G inf norm diff output projections} shows comparisons between balanced POD results with the same order of output projection, one set based on $T$ different projection matrices along one period in the periodic setting, and the other only having one time-invariant projection matrix (see Section~\ref{sec:outputproj}).   For the cases where $r_{op}$ are low, these two approaches give almost identical results, or even the latter one gives better results.  However, when the order of output projection $r_{op}$ increases, such as $r_{op}=6$, the results based on $T$ different projection matrices at each time step are better than those by a single projection matrix, as we expect.

\begin{figure}[htb]
\centering
\includegraphics[width=0.6\linewidth]{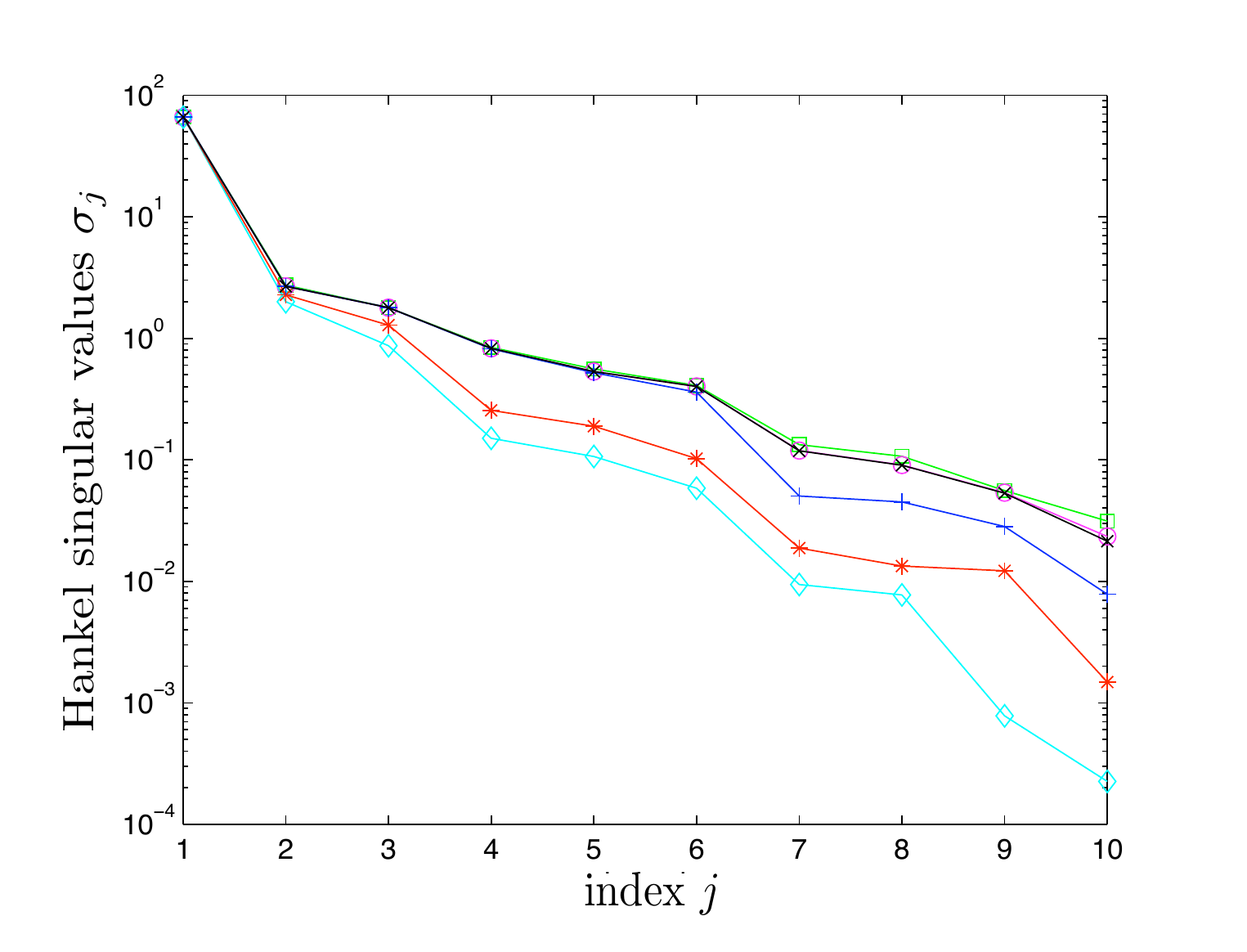}
\caption{The Hankel singular values $\sigma_j$: exact balanced truncation(\textcolor{green}{$\square$}); balanced truncation by the method of snapshots but without output projection(\textcolor{magenta}{$\circ$}); balanced POD with $r_{op}=1$ (\textcolor{cyan}{$\lozenge$}); balanced POD with $r_{op}=2$ (\textcolor{red}{$*$}); balanced POD with $r_{op}=6$ (\textcolor{blue}{$+$}); balanced POD with $r_{op}=10$ (\textcolor{black}{$\times$}).   }
\label{Figure Hankel Singular Values}
\end{figure}

\begin{figure}[htb]
\centering
\includegraphics[width=0.6\linewidth]{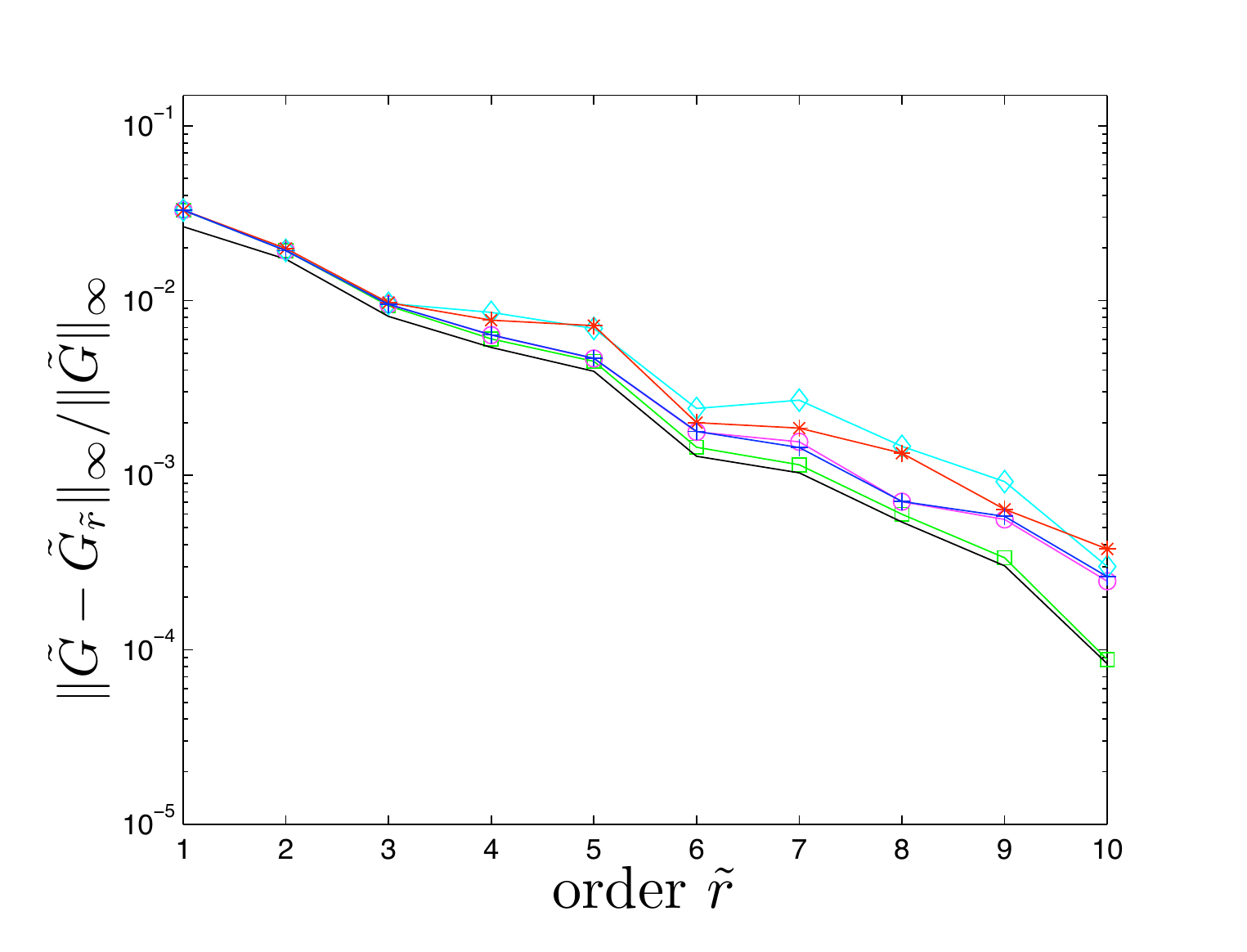}
\caption{Error $||\tilde{G}-\tilde{G}_{\tilde{r}}||_\infty/||\tilde{G}||_\infty$, for exact balanced truncation(\textcolor{green}{$\square$}), balanced truncation by the method of snapshots but without output projection(\textcolor{magenta}{$\circ$}), balanced POD with $r_{op}=1$ (\textcolor{cyan}{$\lozenge$}), balanced POD with $r_{op}=2$ (\textcolor{red}{$*$}), balanced POD with $r_{op}=6$ (\textcolor{blue}{$+$}), and the lower bound for any model reduction scheme (\textcolor{black}{$-$}).}
\label{Figure G inf norm}
\end{figure}

\begin{figure}[htb]
\centering
\includegraphics[width=0.6\linewidth]{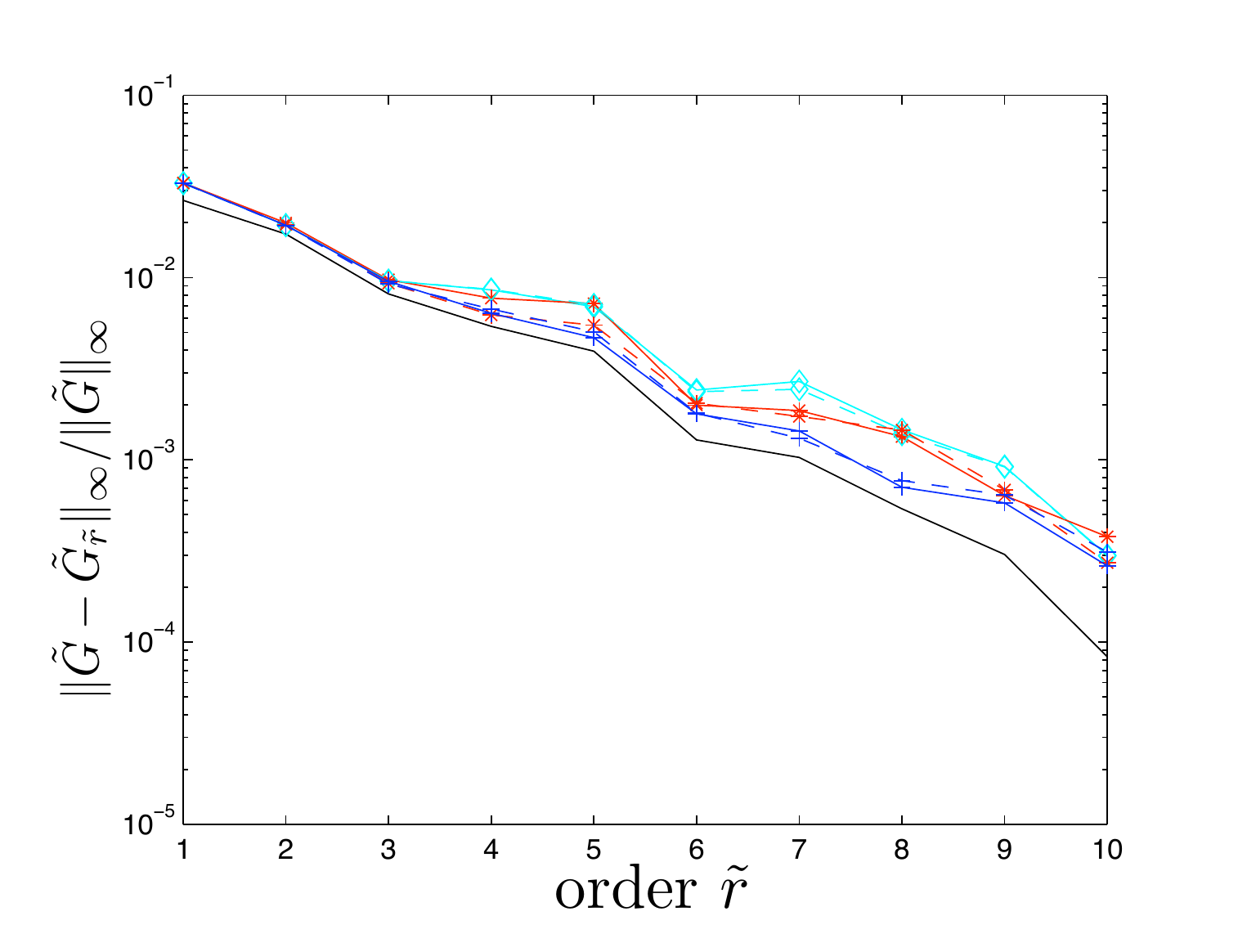}
\caption{Time varying $T$-periodic output projections versus time-invariant output projections: Error $||\tilde{G}-\tilde{G}_{\tilde{r}}||_\infty/||\tilde{G}||_\infty$, for balanced POD with $r_{op}=1$ (\textcolor{cyan}{$\lozenge$}), balanced POD with $r_{op}=2$ (\textcolor{red}{$*$}) and balanced POD with $r_{op}=6$ (\textcolor{blue}{$+$}). Solid lines correspond to cases using $T$ different projection matrices along one period for the periodic system, and dashed lines using one single projection matrix.  The black solid line is the lower bound for any model reduction scheme (\textcolor{black}{$-$}).}
\label{Figure G inf norm diff output projections}
\end{figure}

\section{Conclusion and future directions}
We have proposed a version of the balanced POD method to realize approximate balanced truncation for linear asymptotically stable periodic systems, especially with very high-dimensional states and outputs but a small number of inputs. It is a generalization of the balanced POD method for linear time invariant system developed in~\cite{Ro2005}.  The development of this balanced POD is based on a lifting approach, and key parts include the method of snapshots for computing empirical Gramians, and a version of output projection that gives a periodic projected system based on POD reduction, with which the number of adjoint simulations needed for computing the empirical observability Gramian is substantially decreased.  Simulation results given in the previous section validate the approach. This snapshot-based approach is also readily applicable to high-dimensional systems with  high-dimensional inputs and few outputs, where an input projection is needed. 

A future direction of this work is to apply the balanced POD method to construct reduced-order models of high-dimensional (linearized) periodic systems arising in engineering applications and then, based on the low-dimensional models, to design closed-loop control laws based on these models.  For instance, such periodic orbits may arise as periodic shedding in the wake of a bluff body~\cite{Noack-03}, or from open-loop forcing at a prescribed frequency, as in the recent results of~\cite{MiKaSpKim2006}, which show that a periodic blowing and suction of flow added at the walls of a channel flow may reduce drag. The dimension of states of such systems, including the three components of velocity and pressure at each grid point in the channel, can be on the order of $10^6$, and thus reduced-order models of such systems are quite valuable for designing model-based control laws.     

\begin{appendix}

\begin{paragraph}{Observability Gramian for linear time-varying systems}

The observability Gramian provides a measure of the influence of an initial state $x(j)$ on future outputs with zero control inputs.  To see that, first, for a fixed time $j$, we define a linear operator $\Psi_{oj}: \mathbb{C}^n\rightarrow l^2[j,\infty)$ for system (\ref{LPS}) to describe the state-output behavior $y=\Psi_{oj}x(j)$ with zero inputs and an initial state $x(j)$.  More precisely, $y(j+k)=C(j+k)F(j+k,j)x(j)$, $k=0,1,\cdots$. To measure to what degree the state $x(j)$ excites the output $y$, it is natural to compute the square of the induced norm 
\begin{align*}
||y||_{l^2}^2=\langle y,y\rangle_{l^2}
\end{align*}
where $\langle\cdot,\cdot\rangle$ denotes an inner product, with subscripts specifying the vector space where the inner product is defined.  The observability Gramian given in (\ref{Grammian_def}) has the following property:  
\begin{prop}
\begin{equation*}
W_o(j)=\Psi_{oj}^*\Psi_{oj},
\end{equation*}
where $\Psi_{oj}^*: l^2[j,\infty)\rightarrow\mathbb{C}^n $ is the adjoint of $\Psi_{oj}$.  
And, 
\begin{align*}
||y||_{l^2}^2=\left\langle x(j),W_{o}(j)x(j)\right\rangle_{\mathbb{C}^n}.
\end{align*}
\label{Prop Observability Gramian property}
\end{prop}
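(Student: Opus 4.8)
The plan is to verify the two claimed identities directly from the definitions, treating $\Psi_{oj}$ as a bounded linear operator from $\mathbb{C}^n$ into $l^2[j,\infty)$. First I would check that $\Psi_{oj}$ is well-defined and bounded: since the system is asymptotically stable, $\|F(j+k,j)\|$ decays geometrically in $k$ (uniform geometric decay, as noted after~(\ref{LPS})), so for any $x(j)$ the sequence $y(j+k)=C(j+k)F(j+k,j)x(j)$ is square-summable, $\Psi_{oj}x(j)\in l^2[j,\infty)$, and $\|\Psi_{oj}x(j)\|_{l^2}\leqslant \mathrm{const}\cdot\|x(j)\|$. The same bound shows the series defining $W_o(j)$ in~(\ref{Grammian_def}) converges, so both sides of the claimed identities make sense.

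Next, for the quadratic-form identity I would expand the $l^2$ norm as a sum over time steps and move each adjoint across the $\mathbb{C}^q$ inner product:
\begin{align*}
\|y\|_{l^2}^2 &= \sum_{k=0}^{\infty}\big\langle C(j+k)F(j+k,j)x(j),\, C(j+k)F(j+k,j)x(j)\big\rangle_{\mathbb{C}^q}\\
&= \Big\langle x(j),\ \Big(\sum_{k=0}^{\infty}F(j+k,j)^*C(j+k)^*C(j+k)F(j+k,j)\Big)x(j)\Big\rangle_{\mathbb{C}^n}.
\end{align*}
Reindexing with $i=j+k$, the inner operator sum is exactly $W_o(j)$ from~(\ref{Grammian_def}), which gives $\|y\|_{l^2}^2=\langle x(j),W_o(j)x(j)\rangle_{\mathbb{C}^n}$.

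For the operator identity $W_o(j)=\Psi_{oj}^*\Psi_{oj}$ I would compute the adjoint explicitly: for $v=(v_k)_{k\geqslant 0}\in l^2[j,\infty)$,
\begin{align*}
\langle \Psi_{oj}x(j),\, v\rangle_{l^2} &= \sum_{k=0}^{\infty}\big\langle C(j+k)F(j+k,j)x(j),\, v_k\big\rangle_{\mathbb{C}^q}\\
&= \Big\langle x(j),\ \sum_{k=0}^{\infty}F(j+k,j)^*C(j+k)^*v_k\Big\rangle_{\mathbb{C}^n},
\end{align*}
so $\Psi_{oj}^*v=\sum_{k=0}^{\infty}F(j+k,j)^*C(j+k)^*v_k$, and composing with $\Psi_{oj}$ yields $\Psi_{oj}^*\Psi_{oj}x(j)=\sum_{k=0}^{\infty}F(j+k,j)^*C(j+k)^*C(j+k)F(j+k,j)x(j)=W_o(j)x(j)$ for all $x(j)$. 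Alternatively, one can deduce the operator identity from the quadratic-form identity by polarization, since both $W_o(j)$ and $\Psi_{oj}^*\Psi_{oj}$ are Hermitian.

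The only real care needed — the main, and fairly minor, obstacle — is justifying the interchange of the infinite sums with the inner products and the associated boundedness/convergence claims; all of this is controlled uniformly by the geometric decay of $F(j+k,j)$ that follows from asymptotic stability, so once that bound is in hand the argument is essentially bookkeeping.
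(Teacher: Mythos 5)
Your proof is correct and follows essentially the same route as the paper: you compute $\Psi_{oj}^*$ explicitly by moving the operators across the inner products and then compose with $\Psi_{oj}$ to recover the defining sum for $W_o(j)$, exactly as the paper does. The only difference is that you additionally justify convergence and boundedness via the geometric decay from asymptotic stability, which the paper leaves implicit.
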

\begin{proof}
First, it is clear that 
\begin{align*}
||y||_{l^2}^2=\langle y,y\rangle_{l^2}=\left\langle\Psi_{oj}x(j),\Psi_{oj}x(j)\right\rangle_{l^2}=\left\langle x(j),\Psi_{oj}^*\Psi_{oj}x(j)\right\rangle_{\mathbb{C}^n}.
\end{align*}

To explicitly express $\Psi_{oj}^*$, we consider, for an arbitrary $z\in l^2[j,\infty)$, 
\begin{align*}
\left\langle\Psi_{oj}^*z,x(j)\right\rangle_{\mathbb{C}^n}&=\left\langle z,\Psi_{oj}x(j)\right\rangle_{l^2}\\
&=\sum_{i=j}^{\infty}\left\langle z(i),C(i)F(i,j)x(j)\right\rangle_{\mathbb{C}^n}\\
&=\left\langle \sum_{i=j}^{\infty} F(i,j)^*C(i)^*z(i),x(j)\right\rangle_{\mathbb{C}^n}
\end{align*}
where matrices are considered as linear operators and $(\cdot)^*$ denotes the corresponding adjoint. So $\Psi_{oj}^*z=\sum_{i=j}^\infty F(i,j)^*C(i)^*z(i)$, and 
$\Psi_{oj}^*(\Psi_{oj}x(j))=\sum_{i=j}^\infty F(i,j)^*C(i)^*C(i)F(i,j)x(j)=W_{o}(j)x(j)$.  
\end{proof}

Note that the observability Gramian has non-negative eigenvalues, the larger ones corresponding to the more observable states.  
\end{paragraph}

\begin{paragraph}{Controllability Gramian for linear time-varying systems}
Similarly, the controllability Gramian provides a measure of the influence of a sequence of input history on the current state.  In other words, to reach a given current state (if possible), the controllability Gramian measures how much effort from control inputs is needed.  For a fixed current time $j$, define a linear operator $\Psi_{cj}:l^2(-\infty,j-1]\rightarrow \mathbb{C}^n$ such that $x(j)=\Psi_{cj}u=\sum_{i=-\infty}^{j-1}F(j,i+1)B(i)u(i)$. This operator describes the input-state behavior with initial state $x(-\infty)=0$ and a sequence of inputs $\{u(i)\}_{-\infty}^{j-1}$.  Consider the `energy' of input needed for reaching the current state $x(j)$ (suppose the system is controllable at time $j$)
\begin{align*}
||u||_{l^2}^2=\langle u,u\rangle_{l^2}.
\end{align*}
We have: 
\begin{prop} The controllability Gramian $W_{c}(j)$ defined in (\ref{Grammian_def}) satisfies
\begin{equation*}
W_c(j)=\Psi_{cj}\Psi_{cj}^*,
\end{equation*}
where $\Psi_{cj}^*:\mathbb{C}^n\rightarrow l^2(-\infty,j-1]$ is the adjoint of $\Psi_{cj}$. 
And, 
\begin{align*}
||u||_{l^2}^2=\left\langle x(j),W_{c}(j)^{-1}x(j)\right\rangle_{\mathbb{C}^n}
\end{align*}
where $(\cdot)^{-1}$ is the inverse of $(\cdot)$.
\label{Prop Controllability Gramian property}
\end{prop}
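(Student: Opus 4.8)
The plan is to mirror the proof of Proposition~\ref{Prop Observability Gramian property}, now working with the ``dual'' operator $\Psi_{cj}$. First I would compute the adjoint $\Psi_{cj}^*:\mathbb{C}^n\rightarrow l^2(-\infty,j-1]$ explicitly: for arbitrary $w\in\mathbb{C}^n$ and $u\in l^2(-\infty,j-1]$,
\begin{align*}
\langle \Psi_{cj}u,\,w\rangle_{\mathbb{C}^n}
&=\Big\langle \sum_{i=-\infty}^{j-1}F(j,i+1)B(i)u(i),\;w\Big\rangle_{\mathbb{C}^n}\\
&=\sum_{i=-\infty}^{j-1}\big\langle u(i),\;B(i)^*F(j,i+1)^*w\big\rangle_{\mathbb{C}^p}
=\big\langle u,\;\Psi_{cj}^*w\big\rangle_{l^2},
\end{align*}
so that $(\Psi_{cj}^*w)(i)=B(i)^*F(j,i+1)^*w$ for $i\leqslant j-1$; the interchange of the infinite sum with the inner product is justified because asymptotic stability makes $\Psi_{cj}$ a bounded operator and the series converges in $\mathbb{C}^n$. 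Composing the two operators,
\begin{align*}
\Psi_{cj}\Psi_{cj}^*w=\sum_{i=-\infty}^{j-1}F(j,i+1)B(i)B(i)^*F(j,i+1)^*w=W_c(j)w,
\end{align*}
which establishes the first identity $W_c(j)=\Psi_{cj}\Psi_{cj}^*$.

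For the energy identity I would invoke the standing assumption that the system is controllable at time $j$, so that $W_c(j)$ is positive definite (hence invertible) and $\Psi_{cj}$ is onto. Given a target state $x(j)$, set $u^\star:=\Psi_{cj}^*W_c(j)^{-1}x(j)$. Then $\Psi_{cj}u^\star=\Psi_{cj}\Psi_{cj}^*W_c(j)^{-1}x(j)=W_c(j)W_c(j)^{-1}x(j)=x(j)$, so $u^\star$ does reach $x(j)$, and, using self-adjointness of $W_c(j)^{-1}$,
\begin{align*}
\|u^\star\|_{l^2}^2
&=\big\langle \Psi_{cj}^*W_c(j)^{-1}x(j),\;\Psi_{cj}^*W_c(j)^{-1}x(j)\big\rangle_{l^2}\\
&=\big\langle W_c(j)^{-1}x(j),\;\Psi_{cj}\Psi_{cj}^*W_c(j)^{-1}x(j)\big\rangle_{\mathbb{C}^n}
=\big\langle x(j),\;W_c(j)^{-1}x(j)\big\rangle_{\mathbb{C}^n}.
\end{align*}
Finally, to read this as ``$\|u\|_{l^2}^2$'' in the sense intended --- the minimum input energy among all $u$ driving the state to $x(j)$ --- I would note that any such $u$ satisfies $\Psi_{cj}(u-u^\star)=0$ while $u^\star\in\operatorname{range}\Psi_{cj}^*=(\ker\Psi_{cj})^{\perp}$, so the Pythagorean identity gives $\|u\|_{l^2}^2=\|u^\star\|_{l^2}^2+\|u-u^\star\|_{l^2}^2\geqslant\|u^\star\|_{l^2}^2$, with equality precisely when $u=u^\star$.

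I expect the only genuine obstacle to be the functional-analytic bookkeeping on the half-line $l^2(-\infty,j-1]$: one must check that $\Psi_{cj}$ is a well-defined bounded operator (so that the defining series converges and the sum/inner-product interchange above is legitimate) and that $\operatorname{range}\Psi_{cj}^*$ is closed and equals $(\ker\Psi_{cj})^{\perp}$. Both follow from the assumed geometric decay of $F(j,i+1)$ together with invertibility of $W_c(j)$, which makes $\Psi_{cj}$ a surjection with closed range; everything else is the computation dual to the observability case, plus the standard minimum-norm least-squares characterization of the pseudo-inverse $\Psi_{cj}^{\dagger}=\Psi_{cj}^*W_c(j)^{-1}$.
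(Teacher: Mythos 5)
Your proof is correct, and the first half --- computing $(\Psi_{cj}^*w)(i)=B(i)^*F(j,i+1)^*w$ by moving the sum through the inner product and then composing to get $\Psi_{cj}\Psi_{cj}^*=W_c(j)$ --- is exactly the paper's argument (the paper simply says ``similar to the observability Gramian case'' and states the formula for the adjoint). Where you genuinely diverge is the energy identity. The paper's proof writes $u=\Psi_{cj}^{-1}x(j)$ and invokes $\left(\Psi_{cj}^{-1}\right)^*=\left(\Psi_{cj}^{*}\right)^{-1}$, a formal manipulation: $\Psi_{cj}$ maps the infinite-dimensional space $l^2(-\infty,j-1]$ onto $\mathbb{C}^n$ and has a huge kernel, so it has no inverse, and the identity $\|u\|_{l^2}^2=\langle x(j),W_c(j)^{-1}x(j)\rangle$ cannot hold for \emph{every} $u$ reaching $x(j)$. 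You instead interpret the statement the only way it can be true --- as the \emph{minimum} input energy --- construct the minimizer $u^\star=\Psi_{cj}^*W_c(j)^{-1}x(j)$ explicitly, and prove minimality via the orthogonal decomposition $\|u\|^2=\|u^\star\|^2+\|u-u^\star\|^2$ using $\operatorname{range}\Psi_{cj}^*\subseteq(\ker\Psi_{cj})^{\perp}$. This buys a rigorous and self-contained argument at the cost of a few extra lines (and the mild functional-analytic housekeeping you flag, which is indeed harmless here since $\operatorname{range}\Psi_{cj}^*$ is finite-dimensional, hence automatically closed); the paper's version is shorter but only makes sense if the reader silently substitutes the pseudo-inverse for $\Psi_{cj}^{-1}$, which is precisely what you have made explicit.
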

\begin{proof}
First,
\begin{align*}
||u||_{l^2}^2=\langle u,u\rangle_{l^2}&=\left\langle\Psi_{cj}^{-1}x(j),\Psi_{cj}^{-1}x(j)\right\rangle_{l^2}\\
&=\left\langle x(j),\left(\Psi_{cj}\Psi_{cj}^*\right)^{-1}x(j)\right\rangle_{\mathbb{C}^n},
\end{align*}
where one uses the fact that $\left(\Psi_{cj}^{-1}\right)^*=\left(\Psi_{cj}^{*}\right)^{-1}$. 

Similar to the observability Gramian case, by calculations under standard inner products of $\mathbb{C}^n$ and $l^2(-\infty,j-1]$, one obtains 
$\left(\Psi_{cj}^*z\right)(i)=B(i)^*F(j,i+1)^*z$, $i=-\infty,\cdots,j-1$, $z\in\mathbb{C}^n$. It follows from definition that $W_c(j)=\Psi_{cj}\Psi_{cj}^*$.  
\end{proof}

Note that the eigenvalues of the controllability Gramian are non-negative and the larger ones correspond to the more controllable states. 
 
\end{paragraph}

\end{appendix}

\footnotesize{
}

\end{document}